\newtheorem{lem}{Lemma}[section]
\newtheorem{cor}[lem]{Corollary}
\newtheorem{thm}[lem]{Theorem}
\newtheorem{Defn}[lem]{Definition}
\newtheorem{Ex}[lem]{Example}
\newtheorem{Question}[lem]{Question}
\newtheorem{Property}[lem]{Property}
\newtheorem{Properties}[lem]{Properties}
\newtheorem{Discussion}[lem]{Remark}
\newtheorem{Construction}[lem]{Construction}
\newtheorem{Notation}[lem]{Notation}
\newtheorem{Fact}[lem]{Fact}
\newtheorem{Notationdefinition}[lem]{Definition/Notation}
\newtheorem{Remarkdefinition}[lem]{Remark/Definition}
\newtheorem{Subprops}{}[lem]
\newtheorem{Para}[lem]{}
\newenvironment{defn}{\begin{Defn}\rm}{\end{Defn}}
\newenvironment{fact}{\begin{Fact}\rm}{\end{Fact}}
\newenvironment{para}{\begin{Para}\rm}{\end{Para}}
\newenvironment{disc}{\begin{Discussion}\rm}{\end{Discussion}}
\newtheorem{intthm}{Theorem}
\newcommand{\cat}[1]{\mathcal{#1}}
\newcommand{\catx}{\cat{X}}
\newcommand{\catp}{\cat{P}}
\newcommand{\catf}{\cat{F}}
\newcommand{\cati}{\cat{I}}
\newcommand{\catb}{\cat{B}}
\newcommand{\catgi}{\cat{GI}}
\newcommand{\catgp}{\cat{GP}}
\newcommand{\catgic}{\cat{GI}_C}
\newcommand{\catgpc}{\cat{GP}_C}
\newcommand{\catbc}{\cat{B}_C}
\newcommand{\catgfc}{\cat{GF}_C}
\newcommand{\catic}{\cat{I}_C}
\newcommand{\catpc}{\cat{P}_C}
\newcommand{\catfc}{\cat{F}_C}
\newcommand{\pd}{\operatorname{pd}}
\newcommand{\id}{\operatorname{id}}	
\newcommand{\fd}{\operatorname{fd}}
\newcommand{\catpd}[1]{\cat{#1}\text{-}\pd}
\newcommand{\xpd}{\catpd{X}}
\newcommand{\xid}{\catid{X}}
\newcommand{\gpd}{\catpd{G}}
\newcommand{\gid}{\catid{G}}
\newcommand{\catid}[1]{\cat{#1}\text{-}\id}
\newcommand{\pcpd}{\catpc\text{-}\pd}
\newcommand{\fcpd}{\catfc\text{-}\pd}
\newcommand{\icid}{\catic\text{-}\id}
\newcommand{\gpcpd}{\catgpc\text{-}\pd}
\newcommand{\gfcpd}{\catgfc\text{-}\pd}
\newcommand{\gicid}{\catgic\text{-}\id}
\newcommand{\depth}{\operatorname{depth}}	
\newcommand{\width}{\operatorname{width}}
\newcommand{\ext}{\operatorname{Ext}}	
\newcommand{\rhom}{\mathbf{R}\!\operatorname{Hom}}	
\newcommand{\HH}{\operatorname{H}}
\newcommand{\Hom}{\operatorname{Hom}}	
\newcommand{\spec}{\operatorname{Spec}}
\newcommand{\tor}{\operatorname{Tor}}
\newcommand{\im}{\operatorname{Im}}
\newcommand{\ideal}[1]{\mathfrak{#1}}
\newcommand{\m}{\ideal{m}}
\newcommand{\p}{\ideal{p}}
\newcommand{\q}{\ideal{q}}
\newcommand{\comp}[1]{\widehat{#1}}
\newcommand{\ass}{\operatorname{Ass}}
\newcommand{\supp}{\operatorname{Supp}}
\newcommand{\bbz}{\mathbb{Z}}
\newcommand{\xra}{\xrightarrow}
\renewcommand{\geq}{\geqslant}
\renewcommand{\leq}{\leqslant}
\renewcommand{\hom}{\Hom}
\newcommand{\caticr}{\cat{I}_C(R)}
\newcommand{\catpcr}{\cat{P}_C(R)}
\newcommand{\catfcr}{\cat{F}_C(R)}
\newcommand{\caticc}{\cat{I}_C}
\newcommand{\catpcc}{\cat{P}_C}
\newcommand{\catii}{\cat{I}}
\newcommand{\catpp}{\cat{P}}
\newcommand{\gfd}{\operatorname{Gfd}}
\renewcommand{\gid}{\operatorname{Gid}}
\newcommand{\catacr}{\cat{A}_C(R)}
\newcommand{\catbcr}{\cat{B}_C(R)}
\newcommand{\gc}{\text{G}_C}
\newcommand{\catpcp}{\cat{P}_{C_{\p}}}
\newcommand{\caticp}{\cat{I}_{C_{\p}}}
\newcommand{\pcppd}{\catpcp\text{-}\pd}
\newcommand{\icpid}{\caticp\text{-}\id}
\renewcommand{\gpd}{\operatorname{Gpd}}
\numberwithin{equation}{lem}
\begin{document}

\bibliographystyle{amsplain}

\author{Sean Sather-Wagstaff}

\address{Sean Sather-Wagstaff,
Department of Mathematics,
NDSU Dept \# 2750,
PO Box 6050,
Fargo, ND 58108-6050 USA}

\email{Sean.Sather-Wagstaff@ndsu.edu}

\urladdr{http://math.ndsu.nodak.edu/faculty/ssatherw/}

\author{Siamak Yassemi}
\address{Siamak Yassemi,
Department of Mathematics, University of Tehran, P.O. Box 13145--448, Tehran, Iran, and School of Mathematics, Institute for Research in Fundamental Sciences (IPM), P.O. Box 19395-5746, Tehran, Iran}

\email{yassemi@ipm.ir}

\urladdr{http://math.ipm.ac.ir/yassemi/}

\thanks{This research 
was conducted while S.S.-W.\  visited the IPM in Tehran during July 2008.  
The research of S.Y.\ was supported in part by a grant from the IPM (No. 87130211).}

\title[Modules of finite homological dimension]
{Modules of finite homological dimension with respect to a semidualizing module}



\keywords{dualizing modules, 
Gorenstein homological dimensions, Gorenstein injective dimension,
Gorenstein projective dimension, Gorenstein rings,  semidualizing modules}
\subjclass[2000]{13C05, 13D05, 13H10}

\begin{abstract}
We prove versions of results of Foxby and Holm about modules of
finite (Gorenstein) injective dimension
and finite (Gorenstein) projective dimension with respect to a
semidualizing module. We also verify special cases of a question of 
Takahashi and White.
\end{abstract}

\maketitle

\section*{Introduction} \label{sec00}

Let $R$ be a commutative noetherian ring.
It is well-known that, if $R$ is Gorenstein and local, then 
every module with finite projective dimension has finite injective dimension. 
Conversely, Foxby~\cite{foxby:ibcahtm,foxby:bcfm} showed that,
if $R$ is local and admits a
finitely generated module of finite projective dimension and finite
injective dimension, then $R$ is Gorenstein. 
More recently, Holm~\cite{holm:rfgid} 
proved that, if  $M$ is an $R$-module of finite projective
dimension and finite \emph{Gorenstein} injective dimension, then 
$M$ has finite injective dimension, and so the localization $R_{\p}$ is 
Gorenstein for each $\p\in\spec(R)$
with $\depth_{R_{\p}}(M_{\p})<\infty$. See Section~\ref{sec01} for 
terminology and notation.

In this paper, we prove analogues of these results for homological
dimensions defined in terms of semidualizing $R$-modules. 
For instance, the following result is proved in~\eqref{para0201}.
Other variants of this result are also given in Section~\ref{sec02}.
It should be noted that our proof of this result
is different from Holm's proof for the special case $C=R$.
In particular, this paper also provides a new proof of Holm's result.

\begin{intthm} \label{thm0001}
Let 
$C$ be a semidualizing $R$-module, and 
let $M$ be an $R$-module
with $\pcpd_R(M)<\infty$ and $\gid_R(M)<\infty$. Then
$\id_R(M)=\gid_R(M)<\infty$ and, for each $\p\in\spec(R)$
with $\depth_{R_{\p}}(M_{\p})$ finite, the $R_{\p}$-module
$C_{\p}$ is  dualizing. 
\end{intthm}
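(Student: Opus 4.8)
The plan is to reduce the entire statement to the single assertion that $\id_R(M)<\infty$. The inequality $\gid_R(M)\le\id_R(M)$ always holds, and whenever the injective dimension is finite it coincides with the Gorenstein injective dimension; so once $\id_R(M)<\infty$ is known, the equality $\id_R(M)=\gid_R(M)<\infty$ is automatic. The assertion about $C_{\p}$ is local in nature and I would deduce it at the very end from $\id_R(M)<\infty$. Thus the heart of the matter is finiteness of the injective dimension.

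To prove $\id_R(M)<\infty$ I would first record the two consequences of the hypotheses. Finiteness of $\pcpd_R(M)$ places $M$ in the Bass class $\catbc$ and, through Foxby equivalence, forces $N:=\rhom_R(C,M)$ to have finite projective dimension with $M\simeq C\lotimes_R N$. The hypothesis $\gid_R(M)<\infty$ is the part that detects the ring. Since finiteness of $\id$, $\gid$, and $\pcpd$ is unaffected by the faithfully flat map $R\to\comp{R}$, and since $C$, $M$, and $N$ base change correctly, I may assume $R$ is complete local; then $R$ admits a dualizing complex $D$, and the standard characterization identifies $\gid_R(M)<\infty$ with membership $M\in\catb_D$.

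The crux is to combine $M\in\catbc$ (with $N$ perfect) and $M\in\catb_D$. Because $N$ is perfect, tensor-evaluation gives $\rhom_R(D,M)\simeq\rhom_R(D,C)\lotimes_R N$, and the Foxby-equivalence characterization says that, for $M\in\catb_D$, finiteness of $\id_R(M)$ is equivalent to finiteness of $\pd_R\rhom_R(D,M)$. Localizing at $\p\in\supp_R(M)$, the complex $N_{\p}$ is perfect and nonzero, so a derived Nakayama argument turns the reflexivity isomorphism built into $M\in\catb_D$ into the isomorphism exhibiting $C_{\p}\in\catb_{D_{\p}}$, that is, $\gid_{R_{\p}}(C_{\p})<\infty$. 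Here I would invoke the rigidity of semidualizing modules: a semidualizing module of finite Gorenstein injective dimension over a local ring is dualizing. Hence $C_{\p}$ is dualizing, so $\rhom_R(D,C)_{\p}$ is perfect, and therefore $\rhom_R(D,M)_{\p}$ is perfect for every $\p\in\supp_R(M)$. The main obstacle I anticipate is the passage from this pointwise perfection to a genuine finite global bound on $\pd_R\rhom_R(D,M)$, equivalently on $\id_R(M)$; I expect to control the bound using the finiteness of $\gid_R(M)$ itself, which caps the amplitude of $\rhom_R(D,M)$.

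For the statement about $C_{\p}$, I would proceed as follows once $\id_R(M)<\infty$ is in hand. Localizing at a prime $\p$ with $\depth_{R_{\p}}(M_{\p})<\infty$, the module $M_{\p}$ is nonzero of finite injective dimension and finite depth, so Bass's theorem makes $R_{\p}$ Cohen--Macaulay, while the localized form of the crux argument, namely finite $C_{\p}$-projective dimension together with finite injective dimension of $M_{\p}$, yields $\id_{R_{\p}}(C_{\p})<\infty$. A semidualizing module of finite injective dimension is dualizing, so $C_{\p}$ is dualizing, as claimed.
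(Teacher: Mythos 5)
Your proposal never actually establishes the central claim $\id_R(M)<\infty$, and the two places where it falls short are genuine gaps rather than routine verifications. First, the reduction ``I may assume $R$ is complete local'' is unjustified: the theorem is stated over an arbitrary noetherian ring, so you must first localize and then complete, and both steps require nontrivial facts about Gorenstein injective dimension for arbitrary (non-finitely-generated) modules --- namely that $\gid$ does not grow under localization and under the flat map $R_\p\to\comp{R_\p}$. These ascent/localization properties are exactly the kind of statement that is only known via the characterization $\gid_R(M)<\infty\Leftrightarrow M\in\catb_D(R)$ over a ring that \emph{already has} a dualizing complex, so your reduction presupposes the machinery it is meant to enable; note also that injective modules do not base change to injective modules along $R\to\comp{R}$, so there is no naive resolution-by-resolution argument. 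Second, and more seriously, even granting the reduction, your argument produces only prime-by-prime perfection of $\rhom_R(D,M)_\p$, and you concede that converting this into a finite global bound on $\pd_R(\rhom_R(D,M))$ (equivalently on $\id_R(M)$) is an ``obstacle'' you ``expect'' to handle via the amplitude of $\rhom_R(D,M)$. Bounded amplitude does not bound projective or flat dimension, and for non-finitely-generated modules projective dimension is not the supremum of its localizations (flat non-projective modules already defeat this), so this missing step is precisely the theorem's content, not a technicality.

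For comparison, the paper gets the global bound in one stroke, with no dualizing complex, no localization, and no completion: Lemma~\ref{lem0102} (with $C=R$) produces an exact sequence $0\to M'\to E\to M\to 0$ with $\id_R(E)=\gid_R(M)<\infty$ and $M'$ Gorenstein injective; by Fact~\ref{projac} the modules $M$, $E$, hence $M'$ and all cosyzygies $M^{(i)}$ of a complete injective resolution of $M'$ lie in $\catbcr$, so $\ext^{\geq 1}_R(C\otimes_RP,M^{(i)})=0$ for projective $P$, and two dimension-shifts (one along a bounded $\catpc$-resolution of $M$, one along the complete injective resolution) give $\ext^{\geq 1}_R(M,M')=0$. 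The sequence then splits, so $M$ is a direct summand of $E$ and $\id_R(M)\leq\gid_R(M)<\infty$ globally, with equality by the dual of \cite[(2.27)]{holm:ghd}. For the statement about $C_\p$, the paper also avoids your rigidity claim (which would again need a dualizing complex over $R_\p$): finite depth and finite $\id_{R_\p}(M_\p)$, together with $\fd_{R_\p}(\hom_{R_\p}(C_\p,M_\p))<\infty$ coming from $\pcpd$ via \cite[(2.11.c)]{takahashi:hasm}, force $C_\p$ to be dualizing by \cite[(8.2)]{christensen:scatac}. If you want to salvage your route, you must either prove the localization/ascent statements for $\gid$ you are assuming and then supply the uniform bound, or abandon the base-change strategy in favor of a splitting argument of this kind.
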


Takahashi and White~\cite{takahashi:hasm} posed the following question: 
When $R$ is a local Cohen-Macaulay ring admitting a dualizing
module and $C$ is a semidualizing $R$-module, if $M$ is an $R$-module
of finite depth
such that $\pcpd_R(M)$ and $\icid_R(M)$ are finite,
must $R$ be  Gorenstein?
An affirmative answer to this question would yield another generalization
of Foxby's theorem.
Our techniques allow us to answer the question in the affirmative
in three special cases. The first one is contained in the next result
which we prove in~\eqref{para0202}; the others are in Theorem~\ref{thm0204}
and Corollary~\ref{cor0205}.

\begin{intthm} \label{thm0002}
Let
$C$ be a semidualizing $R$-module, and let $M$ be an $R$-module
with $\pcpd_R(M)=0$ and $\icid_R(M)<\infty$. Then
$R_{\p}$ is  Gorenstein for each $\p\in\supp_R(M)$.
\end{intthm}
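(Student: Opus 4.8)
The plan is to localize at a prime $\p\in\supp_R(M)$, reduce to showing $R_\p$ is Gorenstein, and then prove that the two hypotheses together force the localized $C$ to be free of rank one.

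First I would pass to the local case. Both $\pcpd$ and $\icid$ localize (a finite $\catic$-coresolution stays one after localizing, since $\Hom_R(C,I)_\p\cong\Hom_{R_\p}(C_\p,I_\p)$ with $I_\p$ injective), $C_\p$ is again semidualizing, and $M_\p\neq 0$ because $\p\in\supp_R(M)$; so I fix $\p$, write $(R,\p)$ for $R_\p$, and aim to show $R$ is Gorenstein. Now $\pcpd_R(M)=0$ gives $M\cong C\otimes_R P$ with $P$ projective, hence free over the local ring $R$; since $M\neq0$ we may take $\Lambda\neq\emptyset$ with $M\cong C^{(\Lambda)}$, and then $\Hom_R(C,M)\cong P$ is free.

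Next I would locate $C$ in the Auslander class. From $\icid_R(M)<\infty$ it follows that $M\in\catac$: the class $\catic$ is contained in $\catac$, and $\catac$ is closed under the kernels arising from dimension shifting along a finite $\catic$-coresolution, so $M\in\catac$; as $\catac$ is closed under direct summands and $C$ is a summand of $C^{(\Lambda)}$, I get $C\in\catac$. Concretely this says $\tor^R_{\geq1}(C,C)=0$ and the canonical map $C\to\Hom_R(C,C\otimes_R C)$ is an isomorphism, i.e.\ $\rhom_R(C,C\otimes_R C)\simeq C$ in the derived category. Writing $E:=C\otimes_R C$ and using Hom--tensor adjunction, I would then compute
\[
\rhom_R(E,E)\simeq\rhom_R\big(C,\rhom_R(C,C\otimes_R C)\big)\simeq\rhom_R(C,C)\simeq R,
\]
so that $E$ is itself a semidualizing module. (The Foxby/Takahashi--White equivalence, applied to $M\in\catac$, moreover gives $\id_R(E)=\id_R\big(E^{(\Lambda)}\big)=\id_R(C\otimes_R M)=\icid_R(M)<\infty$, so $E$ is in fact dualizing and $R$ is Cohen--Macaulay, though only the semidualizing property of $E$ is needed below.)

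The crux is then the \emph{rigidity of semidualizing modules under tensor products}: since $E=C\otimes_R C$ is semidualizing and $\tor^R_{\geq1}(C,C)=0$, the module $C$ admits no nontrivial tensor factorization, forcing $\pd_R C<\infty$ and hence $C\cong R$. I expect this non-factorization step to be the main obstacle, since everything preceding it is formal bookkeeping with Foxby equivalence, whereas extracting $C\cong R$ from the mere semidualizing property of $C\otimes_R C$ is the genuinely rigid input. Once $C\cong R$, the class $\catic$ is just the class of injective modules, so $\id_R(M)=\icid_R(M)<\infty$; as $M\cong R^{(\Lambda)}$ with $\Lambda\neq\emptyset$ this forces $\id_R(R)<\infty$, i.e.\ $R$ is Gorenstein. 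Reinstating the localization shows $R_\p$ is Gorenstein for every $\p\in\supp_R(M)$, as required.
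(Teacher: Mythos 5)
Your proposal is correct and takes essentially the same route as the paper: localize at $\p$, write $M$ as a nonzero direct sum of copies of $C$, place $C$ in the Auslander class $\catacr$, deduce that $C\otimes_RC$ is semidualizing, and conclude $C\cong R$ (hence $\id_R(R)<\infty$) by tensor rigidity. The rigidity step you flag as the main obstacle is exactly the known result that the paper also invokes rather than proves, namely \cite[(3.2)]{frankild:sdcms} (used together with \cite[(3.8)]{frankild:rbsc} inside its Lemma~\ref{lem99}), so your identification of that crux, and the surrounding bookkeeping, match the paper's own argument.
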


\section{Semidualizing Modules and Related Homological Dimensions} \label{sec01}

Throughout this paper $R$ is a commutative noetherian ring.

This section contains definitions and background information for use in
the proofs of our main results in
Section~\ref{sec02}.

\begin{defn}
Let $\catx$ be a class of $R$-modules and $M$ an $R$-module.
An \emph{$\catx$-resolution} of $M$ is a complex of $R$-modules in $\catx$
of the form
$$X =\cdots\xra{\partial^X_{n+1}}X_n\xra{\partial^X_n}
X_{n-1}\xra{\partial^X_{n-1}}\cdots\xra{\partial^X_{1}}X_0\to 0$$
such that $\HH_0(X)\cong M$ and $\HH_n(X)=0$ for $n\geq 1$.
The \emph{$\catx$-projective dimension} of $M$ is the quantity
$$\xpd_R(M)=\inf\{\sup\{n\geq 0\mid X_n\neq 0\}\mid \text{$X$ is an
$\catx$-resolution of $M$}\}.$$
In particular, one has $\xpd_R(0)=-\infty$.
The modules of $\catx$-projective dimension 0 are
the nonzero modules of $\catx$.

Dually, an \emph{$\catx$-coresolution} of $M$ is a complex of $R$-modules in $\catx$
of the form
$$X =\quad 0\to X_0\xra{\partial^X_{0}}X_{-1}\xra{\partial^X_{-1}}\cdots\xra{\partial^X_{n+1}}
X_{n}\xra{\partial^X_{n}}X_{n-1}\xra{\partial^X_{n-1}}\cdots$$
such that $\HH_0(X)\cong M$ and $\HH_n(X)=0$ for $n\leq -1$.
The \emph{$\catx$-injective dimension} of $M$ is the quantity
$$\xid_R(M)=\inf\{\sup\{-n\geq 0\mid X_n\neq 0\}\mid \text{$X$ is an
$\catx$-coresolution of $M$}\}.$$
In particular, one has $\xid_R(0)=-\infty$.
The modules of $\catx$-injective dimension 0 are
the nonzero modules of $\catx$.

When $\catx$ is the class of projective $R$-modules, we write
$\pd_R(M)$ for the associated homological dimension and call it the
\emph{projective dimension} of $M$. Similarly, the \emph{flat 
and injective dimensions} of $M$ are denoted $\fd_R(M)$ and $\id_R(M)$,
respectively.
\end{defn}

The homological dimensions of interest in this paper are built from
semidualizing modules and their associated projective and injective classes, defined next. 
Semidualizing modules occur in the literature with several different names,
e.g., in the work of Foxby~\cite{foxby:gmarm}, Golod~\cite{golod:gdagpi},
Mantese and Reiten~\cite{mantese:wtm},
Vasconcelos~\cite{vasconcelos:dtmc} and Wakamatsu~\cite{wakamatsu:mtse}.
The prototypical semidualizing modules are the dualizing (or canonical)
modules of Grothendieck and Hartshorne~\cite{hartshorne:lc}.

\begin{defn} \label{defn0201}
A finitely generated $R$-module $C$ is \emph{semidualzing} 
if 
the natural homothety morphism
$R\to \Hom_R(C,C)$ is an isomorphism and
$\ext^{\geq 1}_R(C,C)=0$.
An $R$-module $D$ is \emph{dualizing} if it is 
semidualizing and has finite injective dimension.

Let $C$ be a semidualizing $R$-module.
We set
\begin{align*}
\catpcr&=\text{the subcategory of modules $P\otimes_R C$ where $P$ is $R$-projective}\\
\catfcr&=\text{the subcategory of modules $F\otimes_R C$ where $F$ is $R$-flat}\\
\caticr&=\text{the subcategory of modules $\Hom_R(C,I)$ where $I$ is $R$-injective.}
\end{align*}
Modules in $\catpcr$ are called \emph{$C$-projective}, and those
in $\caticr$ are called \emph{$C$-injective}.
\end{defn}

\begin{fact}\label{fact0101}
Let $C$ be a
semidualizing $R$-module.  
It is straightforward to show that, if $P\in\catpc(R)$ and $I\in\catic(R)$,
then $P_{\p}\in\catp_{C_{\p}}(R_{\p})$ and $I_{\p}\in\cati_{C_{\p}}(R_{\p})$
for each $\p\in\spec(R)$. It follows that we have
$\pcppd_{R_{\p}}(M_{\p})\leq\pcpd_R(M)$
and $\icpid_{R_{\p}}(M_{\p})\leq\icid_R(M)$ for each $R$-module $M$.
\end{fact}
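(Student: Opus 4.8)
The plan is to reduce the statement to two membership facts about the localizations of the classes $\catpc(R)$ and $\catic(R)$, and then to obtain the two inequalities by localizing resolutions and coresolutions, using that localization is exact. As a preliminary observation I would record that $C_{\p}$ is a semidualizing $R_{\p}$-module, so that the classes $\catpcp(R_{\p})$ and $\caticp(R_{\p})$ appearing in the statement are the expected ones: since $C$ is finitely generated over the noetherian ring $R$, localization commutes with $\Hom_R(C,-)$ and with $\ext^i_R(C,-)$, whence the localized homothety $R_{\p}\to\Hom_{R_{\p}}(C_{\p},C_{\p})$ is an isomorphism and $\ext^{\geq 1}_{R_{\p}}(C_{\p},C_{\p})\cong\ext^{\geq 1}_R(C,C)_{\p}=0$.

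For the first membership fact I would take $P\otimes_R C\in\catpc(R)$ with $P$ projective; since localization commutes with tensor products and $P_{\p}$ is $R_{\p}$-projective, one gets $(P\otimes_R C)_{\p}\cong P_{\p}\otimes_{R_{\p}}C_{\p}\in\catpcp(R_{\p})$. For the second I would take $\Hom_R(C,I)\in\catic(R)$ with $I$ injective; here I would use that over a noetherian ring the localization $I_{\p}$ is again injective and that $\Hom_R(C,-)$ commutes with localization (as $C$ is finitely generated), so $\Hom_R(C,I)_{\p}\cong\Hom_{R_{\p}}(C_{\p},I_{\p})\in\caticp(R_{\p})$.

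Granting these, the inequalities are formal. Assuming $\pcpd_R(M)$ finite (otherwise there is nothing to prove), I would fix a $\catpc(R)$-resolution $X$ of $M$ of length $\pcpd_R(M)$ and localize it. Because localization is exact it commutes with homology, so $\HH_0(X_{\p})\cong M_{\p}$ and $\HH_n(X_{\p})=0$ for $n\geq 1$; combined with the first membership fact, $X_{\p}$ is a $\catpcp(R_{\p})$-resolution of $M_{\p}$ of length at most $\pcpd_R(M)$, giving $\pcppd_{R_{\p}}(M_{\p})\leq\pcpd_R(M)$. The identical argument applied to a $\catic(R)$-coresolution of $M$, using the second membership fact, yields $\icpid_{R_{\p}}(M_{\p})\leq\icid_R(M)$.

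I expect the only point needing genuine care to be the injective membership fact, since it rests on two properties that both use the noetherian hypothesis essentially: that localization preserves injectivity of modules, and that $\Hom_R(C,-)$ commutes with localization for the finitely generated module $C$. The projective membership fact and the passage from resolutions to the dimension inequalities are routine consequences of the flatness and exactness of localization.
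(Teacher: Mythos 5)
Your proof is correct and is exactly the routine argument the paper has in mind: the paper states this as a Fact without proof, deeming it straightforward, and your three ingredients --- that $C_{\p}$ is semidualizing over $R_{\p}$, that membership in $\catpc(R)$ and $\catic(R)$ is preserved under localization (using that $C$ is finitely generated and $R$ is noetherian, so $\Hom$ and $\ext$ from $C$ commute with localization and injectivity is preserved), and that localizing a (co)resolution gives a (co)resolution of length at most the original --- are precisely the intended steps. Nothing is missing; in particular you correctly isolate the only point requiring the noetherian hypothesis, namely the $\catic$ case.
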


\begin{fact}\label{fact0101'}
A result of Gruson and 
Raynaud~\cite[Seconde Partie, Thm.~(3.2.6)]{raynaud:cpptpm}, and 
Jensen~\cite[Prop.~6]{jensen:vl} says that, if $F$ is a flat $R$-module,
then $\pd_R(F)\leq\dim(R)$. It follows that, if $\fcpd_R(M)<\infty$, then
$\pcpd_R(M)<\infty$. 
\end{fact}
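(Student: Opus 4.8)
The plan is to reduce the assertion to the classical fact that a module of finite flat dimension has finite projective dimension, which is exactly what the quoted bound $\pd_R(F)\leq\dim R$ for flat $F$ gives. First I would unwind the hypothesis: $\fcpd_R(M)<\infty$ means $M$ admits a bounded resolution
$$0\to F_n\otimes_R C\to\cdots\to F_0\otimes_R C\to M\to 0$$
with each $F_i$ flat. By the result of Gruson--Raynaud and Jensen each $F_i$ satisfies $\pd_R(F_i)\leq\dim R=:d$, so I may choose a projective resolution of $F_i$ of length at most $d$. Since $F_i$ is flat one has $\tor^R_{\geq 1}(F_i,C)=0$, so tensoring this projective resolution with $C$ leaves it acyclic and exhibits a $\catpc$-resolution of $F_i\otimes_R C$ of length at most $d$; hence $\pcpd_R(F_i\otimes_R C)\leq d$ for every $i$.

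The remaining, and main, step is to assemble these into a single finite $\catpc$-resolution of $M$, that is, to pass from a bounded resolution of $M$ whose terms all have finite $\catpc$-projective dimension to the conclusion $\pcpd_R(M)<\infty$. Naively one would break the resolution into short exact sequences $0\to K_i\to F_{i-1}\otimes_R C\to K_{i-1}\to 0$ and iterate the horseshoe lemma, but this does not apply verbatim: objects of $\catpc$ need not be projective, so maps into them do not lift along surjections. This is where I expect the real difficulty to lie, and handling the assembly directly would force one to verify a two-out-of-three property for $\pcpd_R$ along these particular short exact sequences.

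The cleanest way around the obstacle is to transport the whole problem through the Foxby (Auslander--Bass) equivalence. It is standard that finiteness of $\fcpd_R(M)$ forces $M\in\catbc(R)$, so $N:=\Hom_R(C,M)\in\catac(R)$ is defined, and the quasi-inverse equivalences $C\otimes_R-\colon\catac(R)\rightleftarrows\catbc(R)\colon\Hom_R(C,-)$ carry $\catfc$-resolutions of $M$ to flat resolutions of $N$ and $\catpc$-resolutions of $M$ to projective resolutions of $N$; thus $\fd_R(N)=\fcpd_R(M)<\infty$ and $\pcpd_R(M)=\pd_R(N)$. After this translation the assembly step reverts to the classical one, where the horseshoe lemma does apply: resolving each flat module in a bounded flat resolution of $N$ by genuine projectives and invoking the quoted bound yields $\pd_R(N)\leq\fd_R(N)+d<\infty$. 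Combining the two displays gives $\pcpd_R(M)=\pd_R(N)<\infty$, as desired.
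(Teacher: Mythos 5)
Your argument is correct and is essentially the paper's intended justification: the statement appears there as a Fact whose ``it follows'' step is exactly your transport through the Foxby equivalence --- the equalities $\fd_R(\Hom_R(C,M))=\fcpd_R(M)$ and $\pd_R(\Hom_R(C,M))=\pcpd_R(M)$ for $M\in\catbc(R)$ are Takahashi--White's (2.11), which the paper cites for precisely such translations elsewhere --- combined with the Gruson--Raynaud/Jensen bound. The only caveat, shared equally by the paper's formulation, is that your final inequality $\pd_R(N)\leq\fd_R(N)+d$ has content only when $\dim(R)<\infty$; this is harmless since the paper invokes the Fact only over local rings.
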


The next classes are central to our proofs and 
were introduced  by Foxby~\cite{foxby:gdcmr}.

\begin{defn} \label{notation08d}
Let $C$ be a
semidualizing $R$-module.  
The \emph{Auslander class} of $C$ is the class $\catacr$
of $R$-modules $M$ such that 
\begin{enumerate}[\quad(1)]
\item $\tor^R_{\geq 1}(C,M)=0=\ext_R^{\geq 1}(C,C\otimes_R M)$, and
\item the natural map $M\to\Hom_R(C,C\otimes_R M)$ is an isomorphism.
\end{enumerate}
The \emph{Bass class} of $C$ is the class $\catbc(R)$
of $R$-modules $M$ such that 
\begin{enumerate}[\quad(1)]
\item $\ext_R^{\geq 1}(C,M)=0=\tor^R_{\geq 1}(C,\Hom_R(C,M))$, and 
\item the natural evaluation map $C\otimes_R\Hom_R(C,M)\to M$ is an isomorphism.
\end{enumerate}
\end{defn}

\begin{fact}\label{projac}
Let $C$ be a
semidualizing $R$-module.  
The categories $\catacr$ and $\catbcr$ are closed under extensions,
kernels of epimorphisms and cokernels of monomorphism; see~\cite[Cor.\ 6.3]{holm:fear}.
The category $\catacr$ contains all modules of finite
flat dimension and those of finite 
$\catic$-injective dimension, and the category $\catbcr$ contains all modules of
finite injective dimension and those of finite 
$\catpcc$-projective dimension
by~\cite[Cors.\ 6.1 and 6.2]{holm:fear}.
\end{fact}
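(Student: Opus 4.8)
My plan is to reconstruct these standard facts (which the statement attributes to Holm) by passing to the derived category $D(R)$ and exploiting that conditions (1)--(2) of Definition~\ref{notation08d} are exactly the unravelling of a single isomorphism condition. First I would observe that an $R$-module $M$ lies in $\catacr$ if and only if, regarding $M$ as a stalk complex, the unit morphism $\eta_M\colon M\to\rhom_R(C,C\lotimes_R M)$ is an isomorphism in $D(R)$; dually, $M\in\catbcr$ if and only if the counit $\varepsilon_M\colon C\lotimes_R\rhom_R(C,M)\to M$ is an isomorphism. Indeed, the vanishing of $\tor^R_{\geq1}(C,M)$ identifies $C\lotimes_R M$ with $C\otimes_R M$, after which the vanishing of $\ext_R^{\geq1}(C,C\otimes_R M)$ together with the isomorphism condition in (2) is precisely the statement that $\eta_M$ is an isomorphism.

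For the closure properties I would work with the derived Auslander class $\{X\in D(R): \eta_X \text{ is an isomorphism}\}$. Since $C\lotimes_R-$ and $\rhom_R(C,-)$ are triangulated functors and $\eta$ is a natural transformation, a distinguished triangle $X'\to X\to X''\to$ induces a morphism of triangles between the $X$'s and their images under $\rhom_R(C,C\lotimes_R-)$; the long exact homology sequences and the five lemma then show that if two of $\eta_{X'},\eta_X,\eta_{X''}$ are isomorphisms so is the third. A short exact sequence $0\to M'\to M\to M''\to0$ of modules supplies such a triangle, which yields closure of $\catacr$ under extensions, kernels of epimorphisms and cokernels of monomorphisms once one checks that the remaining term is again a module in the class, i.e.\ that the relevant $\tor^R_{\geq1}(C,-)$ vanishes. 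The first two cases follow immediately from the long exact $\tor$-sequence. The cokernel-of-monomorphism case is the delicate one: here the long exact sequence only gives $\tor^R_{\geq2}(C,M'')=0$, and the vanishing of $\tor^R_1(C,M'')$ must be extracted from the isomorphism conditions on $M'$ and $M$ together, via the injectivity of $\Hom_R(C,-)$ applied to $M'\hookrightarrow M$. I expect this concentration argument to be the main obstacle, and it is the reason the statement is cited rather than proved here. The Bass class is handled dually using $\varepsilon$.

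For the containments I would first verify the base cases directly. The module $R$ lies in $\catacr$ because $C$ is semidualizing: one has $\tor^R_{\geq1}(C,R)=0$, while $\ext_R^{\geq1}(C,C)=0$ and the homothety isomorphism $R\to\Hom_R(C,C)$ give the rest. Since $C$ is finitely generated, $\ext^i_R(C,-)$ and $\Hom_R(C,-)$ commute with arbitrary direct sums, so all projective---and then, by induction on flat dimension through the cokernel closure, all finite-flat-dimension---modules lie in $\catacr$. For a $C$-injective module $\Hom_R(C,I)$ with $I$ injective, the key input is the evaluation isomorphism $C\lotimes_R\rhom_R(C,I)\xra{\simeq}\rhom_R(\rhom_R(C,C),I)$, valid because $C$ is finitely generated and $I$ has finite injective dimension; combined with $\rhom_R(C,C)\simeq R$ this collapses the target to $I$, simultaneously yielding $\tor^R_{\geq1}(C,\Hom_R(C,I))=0$, the isomorphism $C\otimes_R\Hom_R(C,I)\cong I$, and hence membership in $\catacr$. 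Induction on $\catic$-injective dimension using the kernel-of-epimorphism closure then gives the general containment, and the dual computations (again using $\rhom_R(C,C)\simeq R$ and the appropriate evaluation morphisms) place finite-injective-dimension and finite-$\catpcc$-projective-dimension modules in $\catbcr$.
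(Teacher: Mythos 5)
Your plan is the standard derived-category route (unit/counit isomorphisms, two-of-three on distinguished triangles, Ishikawa evaluation maps, induction along resolutions), and most of its pieces are sound; but as written it does not prove the Fact, because the step you defer --- the ``concentration argument'' --- is not a peripheral technicality but exactly the content that makes the hard cases work. Concretely: for $0\to M'\to M\to M''\to 0$ with $M',M\in\catacr$, your triangle argument places $M''$ in the \emph{derived} Auslander class, but membership of the module $M''$ in $\catacr$ still requires $\tor_1^R(C,M'')=0$, and this reduces (via left-exactness of $\Hom_R(C,-)$ applied to $0\to\tor_1^R(C,M'')\to C\otimes_RM'\to C\otimes_RM$) to the lemma: $\Hom_R(C,T)=0$ implies $T=0$. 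The same lemma is what justifies your claimed equivalence ``$M\in\catacr$ iff $\eta_M$ is an isomorphism'' (you verify only the forward implication), and your inductions for the containments route through precisely the delicate closure cases: a module of finite flat dimension is a cokernel of a monomorphism, and a module of finite injective dimension is a kernel of an epimorphism, so the gap propagates to two of the four containment statements as well. A smaller slip: commuting $\ext^i_R(C,-)$ with direct sums handles projectives, not flats; since flats need not have finite projective dimension when $\dim(R)=\infty$, the true base case of your flat-dimension induction needs Lazard's theorem together with the fact that $\ext^i_R(C,-)$ commutes with filtered colimits (or a tensor-evaluation argument with a resolution of $C$ by finitely generated free modules).

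The good news is that the missing lemma is provable in a few lines, so you should not treat it as an obstacle. Since $\Hom_R(C,C)\cong R$, the module $C$ is finitely generated with $\supp_R(C)=\spec(R)$. If $T\neq 0$, choose $\p\in\ass_R(T)$, so that $R/\p\hookrightarrow T$; left-exactness gives $\Hom_R(C,R/\p)\hookrightarrow\Hom_R(C,T)$, and $\Hom_R(C,R/\p)\neq 0$ because localizing yields $\Hom_R(C,R/\p)_\p\cong\Hom_{R_\p}\bigl(C_\p,k(\p)\bigr)\cong\Hom_{k(\p)}\bigl(C_\p\otimes_{R_\p}k(\p),k(\p)\bigr)\neq 0$ by Nakayama's lemma. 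The dual statement $C\otimes_RT=0\Rightarrow T=0$, which your ``dual'' treatment of $\catbcr$ needs for the kernel-of-epimorphism case, follows by adjointness: $0=\Hom_R(C\otimes_RT,E)\cong\Hom_R(C,\Hom_R(T,E))$ for a faithfully injective $E$, whence $\Hom_R(T,E)=0$ and $T=0$. With these two lemmas inserted, your argument closes up. For comparison: the paper itself offers no proof at all (it cites Holm--White, whose arguments are module-theoretic diagram chases using these same faithfulness properties, valid over associative rings), so a completed version of your derived-categorical reconstruction would be a legitimate, if less elementary, alternative.
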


The next definitions are due to Holm and J\o rgensen~\cite{holm:smarghd} 
in this generality.  

\begin{defn} \label{defn0301}
Let $C$ be a semidualizing $R$-module.
A \emph{complete $\caticc\catii$-resolution} is a complex $Y$ of $R$-modules 
satisfying the following:
\begin{enumerate}[\quad(1)]
\item $Y$ is exact and $\Hom_R(I,Y)$ is exact for each $I\in\caticr$, and
\item $Y_i\in \caticr$ for all $i\geq 0$ and $Y_i\in\cati(R)$ for all  $i< 0$.
\end{enumerate}
An $R$-module $H$ is \emph{$\text{G}_C$-injective} if there
exists a complete $\caticc\catii$-resolution $Y$ such that $H\cong\im(\partial^Y_0)$,
in which case $Y$ is a \emph{complete $\caticc\catii$-resolution of $H$}.  
We set
$$\catgic(R)=\text{the class of $\text{G}_C$-injective $R$-modules}.$$
In the special case $C=R$, we set
$\gid_R(M)=\catgi_R\text{-}\id_R(M)$,
and we write ``complete injective resolution'' instead of 
``complete $\catii_R\catii$-resolution''.

A \emph{complete $\catpp\catpcc$-resolution} is a complex $X$ of $R$-modules 
satisfying the following.
\begin{enumerate}[\quad(1)]
\item $X$ is exact and $\Hom_R(X,P)$ is exact for each $P\in\catpcr$, and
\item $X_i\in\catp(R)$ for all  $i\geq 0$ and $X_i\in\catpcr$ for all $i< 0$.
\end{enumerate}
An $R$-module $M$ is \emph{$\text{G}_C$-projective} if there
exists a complete $\catpp\catpcc$-resolution $X$ such that $M\cong\im(\partial^X_0)$,
in which case $X$ is a \emph{complete $\catpp\catpcc$-resolution of $M$}.  We set
$$\catgpc(R)=\text{the class of $\text{G}_C$-projective $R$-modules}.$$
In the case $C=R$, we set
$\gpd_R(M)=\catgp_R\text{-}\pd_R(M)$.
\end{defn}

The next two lemmas are proved as in~\cite[(2.17),(2.18)]{christensen:ogpifd}
using tools from~\cite{white:gpdrsm}.

\begin{lem} \label{lem0101}
Let $C$ be a semidualizing $R$-module and let $M$ be an $R$-module with $\gpcpd_R(M)<\infty$.
There is an exact sequence of $R$-modules
$$0\to M\to P\to M'\to 0$$
such that $M'\in\catgpc(R)$ and $\pcpd_R(P)=\gpcpd_R(M)$.
\qed
\end{lem}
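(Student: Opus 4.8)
The plan is to argue by induction on $g=\gpcpd_R(M)$, assembling the required sequence from the defining $\catgpc$-resolution of $M$ together with the complete $\catpp\catpcc$-resolutions that witness $\text{G}_C$-projectivity. Throughout I would use the basic features of these classes from~\cite{white:gpdrsm}: that $\catgpc(R)$ is closed under extensions, that $\catpc(R)\subseteq\catgpc(R)$ (hence $\gpcpd_R(-)\leq\pcpd_R(-)$), that in any complete $\catpp\catpcc$-resolution $X$ the cosyzygy $\im(\partial^X_{-1})$ is again $\text{G}_C$-projective, and the usual subadditivity of $\pcpd_R$ and $\gpcpd_R$ along short exact sequences. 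The base case $g=0$ is then immediate: here $M$ is $\text{G}_C$-projective, so a complete $\catpp\catpcc$-resolution $X$ with $M\cong\im(\partial^X_0)$ yields the exact sequence $0\to M\to X_{-1}\to\im(\partial^X_{-1})\to 0$, in which $X_{-1}\in\catpc(R)$ has $\pcpd_R(X_{-1})=0=g$ and $\im(\partial^X_{-1})\in\catgpc(R)$.

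For the inductive step $g\geq 1$, I would first truncate a $\catgpc$-resolution of $M$ to produce an exact sequence $0\to K\to G_0\to M\to 0$ with $G_0\in\catgpc(R)$ and $\gpcpd_R(K)\leq g-1$, and then apply the induction hypothesis to $K$ to obtain $0\to K\to W\to K'\to 0$ with $K'\in\catgpc(R)$ and $\pcpd_R(W)=\gpcpd_R(K)\leq g-1$. Taking the pushout of the two monomorphisms $K\to G_0$ and $K\to W$ produces a module $N$ fitting in $0\to G_0\to N\to K'\to 0$ and $0\to W\to N\to M\to 0$; the first of these shows $N\in\catgpc(R)$ by closure under extensions. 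Since $N$ is $\text{G}_C$-projective, its complete resolution provides $0\to N\to Q\to N'\to 0$ with $Q\in\catpc(R)$ and $N'\in\catgpc(R)$, and the pushout of the surjection $N\to M$ against the monomorphism $N\to Q$ yields a module $P$ sitting in both $0\to M\to P\to N'\to 0$ and $0\to W\to Q\to P\to 0$. The first of these is a sequence of the desired shape, with $M'=N'\in\catgpc(R)$.

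It remains to compute $\pcpd_R(P)$. From $0\to W\to Q\to P\to 0$ and subadditivity I get $\pcpd_R(P)\leq\max\{\pcpd_R(Q),\pcpd_R(W)+1\}\leq g$. For the reverse inequality I use the sequence $0\to M\to P\to N'\to 0$: since $N'$ is $\text{G}_C$-projective, subadditivity gives $\gpcpd_R(M)\leq\gpcpd_R(P)$, and combining this with $\catpc(R)\subseteq\catgpc(R)$ produces the sandwich $g=\gpcpd_R(M)\leq\gpcpd_R(P)\leq\pcpd_R(P)\leq g$, whence $\pcpd_R(P)=g$, as required.

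The step I expect to be the main obstacle is the orientation of the pushouts. A direct $\text{G}_C$-projective approximation of $M$ presents $M$ as a \emph{quotient} of a $\text{G}_C$-projective module, whereas the lemma needs $M$ as a \emph{submodule} of a module of finite $\catpc$-projective dimension; converting the former into the latter is precisely the role of the second pushout, the ``flip'' through the complete resolution of $N$. The other delicate point is extracting the exact equality $\pcpd_R(P)=g$ rather than a mere inequality, which is why the sandwich coming from $\catpc(R)\subseteq\catgpc(R)$ is indispensable.
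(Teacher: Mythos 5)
Your proof is correct and follows essentially the same route as the paper's, which simply cites the argument of Christensen--Frankild--Holm \cite[(2.17)]{christensen:ogpifd} carried out with tools from \cite{white:gpdrsm}: the decisive step in both is the pushout of a surjection $N\onto M$, where $N\in\catgpc(R)$ has kernel of finite $\catpc$-projective dimension, against the cosyzygy embedding $N\hookrightarrow Q$ with $Q\in\catpc(R)$ coming from a complete $\catpp\catpcc$-resolution. The only real difference is that you re-derive the intermediate approximation $0\to W\to N\to M\to 0$ with $\pcpd_R(W)\leq g-1$ by induction and a first pushout, whereas the cited argument quotes this directly (Holm's special precover theorem, resp.\ its $\catgpc$-analogue from \cite{white:gpdrsm}).
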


\begin{lem} \label{lem0102}
Let $C$ be a semidualizing $R$-module and let $M$ be an $R$-module with $\gicid_R(M)<\infty$.
There is an exact sequence of $R$-modules
$$0\to M'\to E \to M\to 0$$
such that  $\icid_R(E)=\gicid_R(M)$ and $M'\in\catgic(R)$.
\qed
\end{lem}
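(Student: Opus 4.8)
The plan is to dualize the strategy behind Lemma~\ref{lem0101} and induct on $g=\gicid_R(M)$, which is finite by hypothesis. Along the way I will simultaneously produce the ``preenvelope'' sequence $0\to M\to H\to L\to 0$ with $H\in\catgic(R)$ and $\icid_R(L)<g$, since the two constructions feed one another. Throughout I use two structural facts about $\catgic(R)$ imported from~\cite{white:gpdrsm} (and proved there through the Foxby equivalence): the class $\catgic(R)$ is closed under extensions, and every cosyzygy occurring in a complete $\caticc\catii$-resolution is again $\text{G}_C$-injective.

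For the base case $g=0$, the module $M$ is $\text{G}_C$-injective, so it admits a complete $\caticc\catii$-resolution $Y$ with $M\cong\im(\partial^Y_0)$. The surjection $\partial^Y_0\colon Y_0\onto M$ has kernel $\ker(\partial^Y_0)=\im(\partial^Y_1)$, giving
$$0\to \im(\partial^Y_1)\to Y_0\to M\to 0.$$
Here $Y_0\in\catic(R)$, so $\icid_R(Y_0)=0=g$, while $\im(\partial^Y_1)$ is $\text{G}_C$-injective because it is a cosyzygy of a complete $\caticc\catii$-resolution. Thus $E=Y_0$ and $M'=\im(\partial^Y_1)$ suffice; note this construction applies to \emph{any} $\text{G}_C$-injective module, which I reuse below.

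For the inductive step ($g\geq 1$) I build the sequence by two pullbacks. Truncating a shortest $\text{G}_C$-injective coresolution of $M$ gives $0\to M\to G\to L_0\to 0$ with $G\in\catgic(R)$ and $\gicid_R(L_0)=g-1$; applying the inductive hypothesis to $L_0$ yields $0\to L_0'\to E_{L_0}\to L_0\to 0$ with $L_0'\in\catgic(R)$ and $\icid_R(E_{L_0})=g-1$. The pullback $W$ of $G\to L_0\from E_{L_0}$ sits in exact sequences $0\to L_0'\to W\to G\to 0$ and $0\to M\to W\to E_{L_0}\to 0$; the first shows $W\in\catgic(R)$ by closure under extensions, so the second embeds $M$ into a $\text{G}_C$-injective module with cokernel $E_{L_0}$ of $\catic$-injective dimension $g-1$. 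Now apply the base-case construction to $W$, obtaining $0\to W'\to E^0\to W\to 0$ with $E^0\in\catic(R)$ and $W'\in\catgic(R)$, and form the pullback $E$ of $E^0\to W\hookleftarrow M$. This produces the candidate $0\to W'\to E\to M\to 0$ together with $0\to E\to E^0\to E_{L_0}\to 0$.

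From the last sequence, $\icid_R(E)\leq\max\{\icid_R(E^0),\icid_R(E_{L_0})+1\}=g$, and from $0\to W'\to E\to M\to 0$ the standard inequality $\gicid_R(M)\leq\max\{\gicid_R(W'),\gicid_R(E)\}=\gicid_R(E)$ together with $\gicid_R(E)\leq\icid_R(E)$ forces $\icid_R(E)=g=\gicid_R(M)$; so $M'=W'$ and this $E$ complete the induction. The step I expect to be the genuine obstacle is not the diagram chasing but the two structural facts about $\catgic(R)$ invoked at the outset: closure under extensions and the $\text{G}_C$-injectivity of cosyzygies of complete $\caticc\catii$-resolutions. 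These are exactly where the semidualizing hypothesis on $C$ enters, through the Auslander and Bass classes and the Foxby equivalence, and are the tools we borrow from~\cite{white:gpdrsm}; granting them, the remainder is formal and parallels~\cite[(2.17),(2.18)]{christensen:ogpifd}.
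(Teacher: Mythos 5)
Your proof is correct and takes essentially the same route as the paper, whose entire proof of this lemma is the remark that it ``is proved as in \cite[(2.17),(2.18)]{christensen:ogpifd} using tools from \cite{white:gpdrsm}'': your induction with pullback diagrams is exactly that argument unfolded, with the Auslander--Buchweitz-type approximation step ($0\to M\to W\to E_{L_0}\to 0$ with $W\in\catgic(R)$ and $\icid_R(E_{L_0})<g$) proved inline rather than quoted. The two facts you import---closure of $\catgic(R)$ under extensions, and $\text{G}_C$-injectivity of the images $\im(\partial^Y_i)$, $i\geq 1$, of a complete $\caticc\catii$-resolution, the latter genuinely nontrivial here because shifting the complex fails when its two halves lie in the different classes $\caticr$ and $\cati(R)$---are precisely the ``tools'' the paper invokes, in their dual (injective) form; cf.\ also \cite{holm:smarghd}.
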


\begin{defn} \label{defn0101}
Assume that $R$ is local with residue field $k$. The \emph{depth} of a
(not necessarily finitely generated) $R$-module $M$ is
$$\depth_R(M)=\inf\{n\geq 0\mid\ext^n_R(k,M)\neq 0\}.$$
\end{defn}

\section{Main Results} \label{sec02}

\begin{para} \label{para0201}
\emph{Proof of Theorem~\ref{thm0001}.}
As $\gid_R(M)$ is finite,
Lemma~\ref{lem0102} yields an exact sequence of $R$-modules
\begin{equation} \label{para0201a} \tag{$\ast$}
0\to M'\to E\to M\to 0
\end{equation}
such that $\id_R(E)<\infty$ and $M'$ is G-injective.
The finiteness of $\pcpd_R(M)$ and $\id_R(E)$
implies that $M,E\in\catbc(R)$, and so 
$M'\in\catbcr$; see Fact~\ref{projac}.

We claim that $\ext^{\geq 1}_R(M,M')=0$. To see this, let $Y$ be a complete injective resolution
of $M'$
and set $M^{(i)}=\im(\partial_i^Y)$ for each $i\in\bbz$. 
Since $M',Y_i\in\catbcr$ for each $i\in\bbz$, we have $M^{(i)}\in\catbcr$
for each $i$, and so
$\ext^{\geq 1}_R(C,M^{(i)})=0$. Hence
$$\ext^{\geq 1}_R(P\otimes_RC,M^{(i)})\cong\hom_R(P,\ext^{\geq 1}_R(C,M^{(i)}))=0$$
for each projective $R$-module $P$ and each $i$. Using a bounded 
$\catpc$-resolution of $M$, a dimension-shifting argument shows that
$\ext^{\geq d+1}_R(M,M^{(i)})=0$ for each $i$
where $d=\pcpd_R(M)$. Another dimension-shifting
argument using the complete injective resolution of $M'$ yields the following
$$\ext^{\geq 1}_R(M,M')\cong \ext^{\geq 1}_R(M,M^{(0)})
\cong \ext^{\geq d+1}_R(M,M^{(d)})=0$$
as claimed.

The previous paragraph  shows that the sequence~\eqref{para0201a}
splits. Hence, we have 
$$\sup\{\id_R(M),\id_R(M')\}=\id_R(E)<
\infty$$
and so $\id_R(M)<\infty$. The equality $\id_R(M)=\gid_R(M)$ now follows from
the result dual to~\cite[(2.27)]{holm:ghd}.

Now, let $\p\in\spec(R)$ with $\depth_{R_{\p}}(M_{\p})$ finite. 
Using Fact~\ref{fact0101} we conclude  that
$\pcppd_{R_{\p}}(M_{\p})$ and $\id_{R_{\p}}(M_{\p})$ are finite.
The finiteness of $\pcppd_{R_{\p}}(M_{\p})$ implies $M_{\p}\in\catb_{C_{\p}}(R_{\p})$
and thus
$\ext^{\geq 1}_{R_{\p}}(C_{\p},M_{\p})=0$. 
(Hence, in the derived category $\cat D(R_{\p})$, there is an isomorphism
$\rhom_{R_{\p}}(C_{\p},M_{\p})\simeq\hom_{R_{\p}}(C_{\p},M_{\p})$.)
Using~\cite[(2.11.c)]{takahashi:hasm}, the finiteness of $\pcppd_{R_{\p}}(M_{\p})$ also implies
$$\fd_{R_{\p}}(\hom_{R_{\p}}(C_{\p},M_{\p}))
\leq\pd_{R_{\p}}(\hom_{R_{\p}}(C_{\p},M_{\p}))=\pcppd_{R_{\p}}(M_{\p})<\infty.$$
The $R_{\p}$-module $M_{\p}$ has finite injective dimension
and finite depth, so the finiteness of 
$\fd_{R_{\p}}(\hom_{R_{\p}}(C_{\p},M_{\p}))$
implies that $C_{\p}$ is dualizing for $R_{\p}$; see~\cite[(8.2)]{christensen:scatac}.
\qed
\end{para}

\begin{cor} \label{cor0201}
Assume that $R$ is local, and 
let $C$ be a semidualizing $R$-module. The following conditions are equivalent:
\begin{enumerate}[\rm(i)]
\item \label{cor0201a}
$C$ is a dualizing $R$-module;
\item \label{cor0201b}
there exists a finitely generated $R$-module $M\neq 0$ such that
$\pcpd_R(M)<\infty$ and $\id_R(M)<\infty$;
\item \label{cor0201c}
there exists an $R$-module $M\neq 0$ of finite depth
such that
$\pcpd_R(M)<\infty$ and $\gid_R(M)<\infty$.
\end{enumerate}
\end{cor}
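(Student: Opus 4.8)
The plan is to establish the equivalence by proving the cyclic chain of implications (\ref{cor0201a}) $\Rightarrow$ (\ref{cor0201b}) $\Rightarrow$ (\ref{cor0201c}) $\Rightarrow$ (\ref{cor0201a}). The essential content is already contained in Theorem~\ref{thm0001}; what remains is to supply a witnessing module in the direction from (\ref{cor0201a}), and to unwind the definitions in the trivial implication from (\ref{cor0201b}) to (\ref{cor0201c}).

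For (\ref{cor0201a}) $\Rightarrow$ (\ref{cor0201b}), I would take $M=C$. Since $C$ is semidualizing it is finitely generated by Definition~\ref{defn0201}, and it is nonzero because $R\cong\Hom_R(C,C)$ while $R\neq 0$. As $C\cong R\otimes_R C$ lies in $\catpc(R)$, we obtain $\pcpd_R(C)=0<\infty$, and the hypothesis that $C$ is dualizing gives $\id_R(C)<\infty$ directly from Definition~\ref{defn0201}. For (\ref{cor0201b}) $\Rightarrow$ (\ref{cor0201c}) I claim the same module works: a nonzero finitely generated module over the noetherian local ring $R$ has finite depth (it is bounded above by $\dim R$), and $\gid_R(M)\leq\id_R(M)<\infty$ because modules of finite injective dimension have finite Gorenstein injective dimension (equivalently $\catic(R)\subseteq\catgic(R)$ when $C=R$; see~\cite{holm:ghd}).

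For (\ref{cor0201c}) $\Rightarrow$ (\ref{cor0201a}), I would apply Theorem~\ref{thm0001} to $M$ and specialize its conclusion to the prime $\p=\m$, the maximal ideal of $R$. Since $R$ is local we have $R_{\m}=R$ and $M_{\m}=M$, so $\depth_{R_{\m}}(M_{\m})=\depth_R(M)$, which is finite by hypothesis; hence Theorem~\ref{thm0001} asserts that $C_{\m}=C$ is dualizing for $R_{\m}=R$, which is exactly (\ref{cor0201a}). The one point requiring care is that $M$ in part (\ref{cor0201c}) need not be finitely generated, but Theorem~\ref{thm0001} imposes no such hypothesis, so the application is legitimate. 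There is no serious obstacle here: once Theorem~\ref{thm0001} is in hand, the corollary reduces to the routine facts that a nonzero finitely generated module over a local ring has finite depth and that finite injective dimension forces finite Gorenstein injective dimension.
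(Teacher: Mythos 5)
Your proof is correct and takes essentially the same route as the paper: the same cyclic chain of implications, with $M=C$ witnessing (i)$\Rightarrow$(ii), the standard facts that a nonzero finitely generated module over a local ring has finite depth and that $\id_R(M)<\infty$ forces $\gid_R(M)<\infty$ for (ii)$\Rightarrow$(iii), and an application of Theorem~\ref{thm0001} at the maximal ideal for (iii)$\Rightarrow$(i). The paper's proof is identical in substance, merely labeling (ii)$\Rightarrow$(iii) as ``straightforward'' where you spell out the details.
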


\begin{proof}
The implication \eqref{cor0201b}$\implies$\eqref{cor0201c}
is straightforward, and \eqref{cor0201c}$\implies$\eqref{cor0201a}
follows from Theorem~\ref{thm0001}. 
For \eqref{cor0201a}$\implies$\eqref{cor0201b}, note that 
$\pcpd_R(C)<\infty$ and $\id_R(C)<\infty$
since $C$ is dualizing for $R$. 
\end{proof}

The following versions of Theorem~\ref{thm0001} and Corollary~\ref{cor0201}
are proved similarly, using Lemmas~\ref{lem0101} and~\ref{lem0102}. 

\begin{thm} \label{thm0202}
Let 
$C$ a semidualizing $R$-module, and 
let $M$ be an $R$-module
with $\pd_R(M)<\infty$ and $\gicid_R(M)<\infty$. Then
$\icid_R(M)=\gicid_R(M)<\infty$. Furthermore, for each $\p\in\spec(R)$
such that $\depth_{R_{\p}}(M_{\p})$ is finite, the localization
$C_{\p}$ is a dualizing 
$R_{\p}$-module. \qed
\end{thm}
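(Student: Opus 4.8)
The plan is to follow the proof of Theorem~\ref{thm0001} given in~\eqref{para0201}, but with the roles of the projective and $C$-injective sides interchanged. Since $\gicid_R(M)<\infty$, Lemma~\ref{lem0102} supplies a short exact sequence
$$0\to M'\to E\to M\to 0$$
with $\icid_R(E)=\gicid_R(M)<\infty$ and $M'\in\catgic(R)$. Because $\pd_R(M)<\infty$ we have $\fd_R(M)<\infty$, so Fact~\ref{projac} gives $M\in\catac(R)$; unlike the proof of Theorem~\ref{thm0001}, this is essentially the only Auslander- or Bass-class membership the argument needs.

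First I would show that the displayed sequence splits, i.e.\ that $\ext^{\geq 1}_R(M,M')=0$. Choose a complete $\catic\catii$-resolution $Y$ of $M'$ and put $M^{(i)}=\im(\partial^Y_i)$, so that $M^{(0)}\cong M'$ and there are short exact sequences $0\to M^{(i)}\to Y_{i-1}\to M^{(i-1)}\to 0$. For a $C$-injective module $Y_i=\Hom_R(C,I)$ with $I$ injective, the adjunction $\Hom_R(-,\Hom_R(C,I))\cong\Hom_R(C\otimes_R-,I)$ together with the exactness of $\Hom_R(-,I)$ yields $\ext^j_R(M,\Hom_R(C,I))\cong\Hom_R(\tor^R_j(C,M),I)$, which vanishes for $j\geq 1$ because $M\in\catac(R)$ forces $\tor^R_{\geq 1}(C,M)=0$. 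Thus $\ext^{\geq 1}_R(M,Y_i)=0$ for every $i\geq 0$. Writing $d=\pd_R(M)$ and dimension-shifting up through the nonnegative ($C$-injective) terms $Y_0,\dots,Y_{d-1}$, I obtain for each $j\geq 1$
$$\ext^j_R(M,M')\cong\ext^{j+d}_R(M,M^{(d)})=0,$$
the final equality holding because $j+d>\pd_R(M)$. The point is that finiteness of the ordinary projective dimension makes the high Ext modules vanish automatically, so only finitely many $C$-injective terms enter and no condition on the class of $M'$ is required.

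With the sequence split we have $E\cong M\oplus M'$, hence $\icid_R(M)\leq\icid_R(E)<\infty$, and the equality $\icid_R(M)=\gicid_R(M)$ then follows from the standard comparison (the $\catic$-injective analogue of the result dual to~\cite[(2.27)]{holm:ghd}), since $\gicid_R(M)\leq\icid_R(M)$ with equality once the latter is finite. For the localization statement, fix $\p\in\spec(R)$ with $\depth_{R_{\p}}(M_{\p})$ finite. By Fact~\ref{fact0101} (and the behaviour of $\pd$ under localization) both $\pd_{R_{\p}}(M_{\p})$ and $\icpid_{R_{\p}}(M_{\p})$ are finite, and the former gives $M_{\p}\in\cata_{C_{\p}}(R_{\p})$. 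I would then test dualizability on $N:=C_{\p}\otimes_{R_{\p}}M_{\p}$: the dual of~\cite[(2.11.c)]{takahashi:hasm} gives $\id_{R_{\p}}(N)=\icpid_{R_{\p}}(M_{\p})<\infty$, while membership in the Auslander class gives $\Hom_{R_{\p}}(C_{\p},N)\cong M_{\p}$, so this $\Hom$ has finite flat dimension, and $\depth_{R_{\p}}(N)=\depth_{R_{\p}}(M_{\p})<\infty$. Applying~\cite[(8.2)]{christensen:scatac} to $N$ exactly as in~\eqref{para0201} then forces $C_{\p}$ to be dualizing for $R_{\p}$.

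The main obstacle I anticipate is this final step: checking that the test module $N=C_{\p}\otimes_{R_{\p}}M_{\p}$ really meets the hypotheses of~\cite[(8.2)]{christensen:scatac}, especially that $\depth_{R_{\p}}(N)$ is finite and that $\Hom_{R_{\p}}(C_{\p},N)$ recovers $M_{\p}$ with finite flat dimension. These should follow from Foxby equivalence between $\cata_{C_{\p}}(R_{\p})$ and $\catb_{C_{\p}}(R_{\p})$ together with its preservation of depth, but identifying the precise dual forms of~\cite[(2.11.c)]{takahashi:hasm} and~\cite[(8.2)]{christensen:scatac} --- rather than establishing an ad hoc dual criterion --- is the delicate bookkeeping on which the argument turns.
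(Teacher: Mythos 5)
Your proof is correct and is essentially the argument the paper intends: Theorem~\ref{thm0202} is disposed of there as ``proved similarly'' to Theorem~\ref{thm0001} using Lemma~\ref{lem0102}, and your write-up is exactly that mirrored argument --- the splitting of $0\to M'\to E\to M\to 0$ by dimension-shifting along the complete $\catic\catii$-resolution (with $M\in\catacr$ from Fact~\ref{projac} supplying $\ext^{\geq 1}_R(M,Y_i)=0$ against the $C$-injective terms, and finite $\pd_R(M)$ killing the high Ext groups in place of the Bass-class bookkeeping of \eqref{para0201}), followed by the same appeals to \cite[(2.11)]{takahashi:hasm} and \cite[(8.2)]{christensen:scatac}, applied to $C_{\p}\otimes_{R_{\p}}M_{\p}$ rather than to $M_{\p}$ itself. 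The one step you flag as delicate --- finiteness of $\depth_{R_{\p}}(C_{\p}\otimes_{R_{\p}}M_{\p})$ --- does hold as you expect, for instance by the depth formula for Tor-independent modules (applicable since $\fd_{R_{\p}}(M_{\p})<\infty$ and $\tor^{R_{\p}}_{\geq 1}(C_{\p},M_{\p})=0$), which yields $\depth_{R_{\p}}(C_{\p}\otimes_{R_{\p}}M_{\p})=\depth_{R_{\p}}(M_{\p})$ because $\depth_{R_{\p}}(C_{\p})=\depth(R_{\p})$.
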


\begin{cor} \label{cor0203}
Assume that $R$ is local, and 
let $C$ be a semidualizing $R$-module. The following conditions are equivalent:
\begin{enumerate}[\rm(i)]
\item \label{cor0203a}
$C$ is a dualizing $R$-module;
\item \label{cor0203b}
there exists a finitely generated $R$-module $M\neq 0$ such that
$\pd_R(M)<\infty$ and $\icid_R(M)<\infty$;
\item \label{cor0203c}
there exists an $R$-module $M\neq 0$ of finite depth
such that
$\pd_R(M)<\infty$ and $\gicid_R(M)<\infty$. \qed
\end{enumerate}
\end{cor}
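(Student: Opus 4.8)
The plan is to prove the three conditions equivalent via the cycle \eqref{cor0203b}$\implies$\eqref{cor0203c}$\implies$\eqref{cor0203a}$\implies$\eqref{cor0203b}, following the proof of Corollary~\ref{cor0201} verbatim in structure but invoking Theorem~\ref{thm0202} in place of Theorem~\ref{thm0001}. The implication \eqref{cor0203b}$\implies$\eqref{cor0203c} is immediate: a nonzero finitely generated module over the local ring $R$ has finite depth, and finite $\catic$-injective dimension forces finite $\text{G}_C$-injective dimension (the relative analogue of $\gid_R\leq\id_R$, since $\caticr\subseteq\catgic(R)$), so $\gicid_R(M)\leq\icid_R(M)<\infty$ and the same $M$ witnesses \eqref{cor0203c}. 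For \eqref{cor0203c}$\implies$\eqref{cor0203a} I would apply Theorem~\ref{thm0202} at the maximal ideal: since $R$ is local and $M$ has finite depth, one has $M_{\m}=M$ and $\depth_{R_{\m}}(M_{\m})=\depth_R(M)<\infty$, so the theorem yields that $C_{\m}=C$ is dualizing over $R_{\m}=R$.

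The one direction requiring an actual construction is \eqref{cor0203a}$\implies$\eqref{cor0203b}, and this is where the real work (and the main obstacle) lies. In Corollary~\ref{cor0201} the witnessing module was $C$ itself; here the roles are dualized, and the natural candidate is $M=R$. This $M$ is nonzero, finitely generated, and has $\pd_R(R)=0<\infty$, so everything reduces to checking $\icid_R(R)<\infty$. I would start from a finite injective resolution
\[
0\to C\to I^0\to I^1\to\cdots\to I^n\to 0,
\]
which exists because $\id_R(C)<\infty$ as $C$ is dualizing, and apply the functor $\Hom_R(C,-)$. Each $\Hom_R(C,I^j)$ lies in $\caticr$ by definition, so the resulting complex is built from $C$-injective modules; the point is to show that augmenting it by $R$ gives an exact sequence
\[
0\to R\to\Hom_R(C,I^0)\to\cdots\to\Hom_R(C,I^n)\to 0,
\]
which is then a finite $\catic$-coresolution of $R$ and gives $\icid_R(R)\leq\id_R(C)<\infty$.

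The key step, and the place where the semidualizing hypotheses on $C$ are essential, is verifying that applying the left-exact functor $\Hom_R(C,-)$ to the injective resolution of $C$ produces a complex whose homology is exactly $\ext^\bullet_R(C,C)$, namely $R$ in degree $0$ and $0$ in all positive degrees. This is precisely the content of $\Hom_R(C,C)\cong R$ (the homothety isomorphism) together with $\ext^{\geq 1}_R(C,C)=0$, and it simultaneously identifies the augmentation map $R\cong\Hom_R(C,C)\hookrightarrow\Hom_R(C,I^0)$ and establishes exactness of the displayed sequence. Once this is in hand, the three implications close the cycle and the equivalence follows.
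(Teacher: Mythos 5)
Your proposal is correct and follows exactly the route the paper intends: the paper omits the proof of Corollary~\ref{cor0203} as being ``proved similarly'' to Corollary~\ref{cor0201}, i.e., \eqref{cor0203b}$\implies$\eqref{cor0203c} is straightforward (using $\caticr\subseteq\catgic(R)$ and finiteness of depth for nonzero finitely generated modules over a local ring), \eqref{cor0203c}$\implies$\eqref{cor0203a} follows from Theorem~\ref{thm0202} applied at the maximal ideal, and \eqref{cor0203a}$\implies$\eqref{cor0203b} requires a witness with finite projective dimension and finite $\catic$-injective dimension. Your witness $M=R$, with $\icid_R(R)<\infty$ verified by applying $\Hom_R(C,-)$ to a finite injective resolution of $C$ and using $\Hom_R(C,C)\cong R$ and $\ext^{\geq 1}_R(C,C)=0$, is precisely the natural analogue of the paper's witness $M=C$ in Corollary~\ref{cor0201}.
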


\begin{thm} \label{thm0201}
Let 
$C$ a semidualizing $R$-module, and 
let $M$ be an $R$-module
with $\icid_R(M)<\infty$ and $\gpd_R(M)<\infty$. Then
$\pd_R(M)=\gpd_R(M)<\infty$. Furthermore, for each $\p\in\spec(R)$
such that $\depth_{R_{\p}}(M_{\p})$ is finite, the localization
$C_{\p}$ is a dualizing 
$R_{\p}$-module. \qed
\end{thm}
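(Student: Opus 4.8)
The plan is to mirror the proof in~\eqref{para0201}, interchanging the roles of projective and injective objects and of the Bass and Auslander classes. Since $\gpd_R(M)<\infty$, Lemma~\ref{lem0101} (with $C=R$) provides an exact sequence
$$0\to M\to P\to M'\to 0$$
with $M'$ Gorenstein projective and $\pd_R(P)=\gpd_R(M)<\infty$. The finiteness of $\icid_R(M)$ and $\pd_R(P)$ places $M$ and $P$ in the Auslander class $\catac(R)$ by Fact~\ref{projac} (a module of finite $\catic$-injective dimension and a module of finite flat dimension both lie in $\catac(R)$), and since $\catac(R)$ is closed under cokernels of monomorphisms we obtain $M'\in\catac(R)$.

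The heart of the argument is the vanishing $\ext^{\geq 1}_R(M',M)=0$. To establish it, I would take a complete projective resolution $X$ of $M'$ and set $M^{(i)}=\im(\partial^X_i)$; as in~\eqref{para0201}, all the syzygy and cosyzygy modules $M^{(i)}$ lie in $\catac(R)$, so $\tor^R_{\geq 1}(C,M^{(i)})=0$. The input dual to the computation in~\eqref{para0201} is the natural isomorphism $\ext^n_R(M^{(i)},\hom_R(C,J))\cong\hom_R(\tor^R_n(C,M^{(i)}),J)$ for injective $J$, which yields $\ext^{\geq 1}_R(M^{(i)},I)=0$ for every $I\in\catic(R)$. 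Feeding this into a bounded $\catic$-coresolution of $M$ of length $e=\icid_R(M)$, a dimension shift in the second variable gives $\ext^{\geq e+1}_R(M^{(i)},M)=0$ for all $i$. A second dimension shift, now in the first variable along the complete projective resolution of $M'$, gives $\ext^k_R(M',M)\cong\ext^{k+e}_R(M^{(-e)},M)=0$ for all $k\geq 1$, proving the claim.

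The vanishing $\ext^1_R(M',M)=0$ splits the sequence, so $M$ is a direct summand of $P$ and hence $\pd_R(M)\leq\pd_R(P)=\gpd_R(M)<\infty$; the equality $\pd_R(M)=\gpd_R(M)$ then follows from~\cite[(2.27)]{holm:ghd}. For the localization statement, fix $\p\in\spec(R)$ with $\depth_{R_\p}(M_\p)$ finite. By Fact~\ref{fact0101} and the behavior of projective dimension under localization, both $\icpid_{R_\p}(M_\p)$ and $\pd_{R_\p}(M_\p)$ are finite. Finiteness of $\icpid_{R_\p}(M_\p)$ gives $M_\p\in\cata_{C_\p}(R_\p)$, whence $\tor^{R_\p}_{\geq 1}(C_\p,M_\p)=0$ and, via Foxby equivalence (the injective counterpart of~\cite[(2.11.c)]{takahashi:hasm}), $\id_{R_\p}(C_\p\otimes_{R_\p}M_\p)<\infty$. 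I would then apply the criterion~\cite[(8.2)]{christensen:scatac} to the module $N=C_\p\otimes_{R_\p}M_\p$: it has finite injective dimension, its $C_\p$-dual $\hom_{R_\p}(C_\p,N)\cong M_\p$ has finite projective (hence flat) dimension, and $\depth_{R_\p}(N)=\depth_{R_\p}(M_\p)$ is finite because Foxby equivalence preserves depth. This forces $C_\p$ to be dualizing for $R_\p$.

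I expect the main obstacle to be the two-step dimension shift behind $\ext^{\geq 1}_R(M',M)=0$: one must correctly identify the dual Tor--Ext adjunction through an injective module and then shift in the appropriate variable and direction so as to land in the range where the bounded $\catic$-coresolution of $M$ forces the relevant $\ext$ to vanish. A secondary point requiring care is the localization, where one must check that $N=C_\p\otimes_{R_\p}M_\p$ genuinely satisfies the depth and dimension hypotheses of~\cite[(8.2)]{christensen:scatac}; the depth equality under Foxby equivalence is the one nonformal ingredient there.
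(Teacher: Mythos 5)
Your proof is correct and follows exactly the route the paper intends: Theorem~\ref{thm0201} is stated in the paper as being ``proved similarly'' to~\eqref{para0201} using Lemma~\ref{lem0101}, and your argument is precisely that dualization --- the splitting sequence $0\to M\to P\to M'\to 0$ with $C=R$, the Auslander class in place of the Bass class, the Tor--Ext duality through an injective module in place of Hom-tensor adjointness, and the same two-step dimension shift. Your localization step, applying \cite[(8.2)]{christensen:scatac} to $C_{\p}\otimes_{R_{\p}}M_{\p}$ together with the (true and standard) fact that Foxby equivalence preserves depth, is likewise the natural dual of the paper's use of \cite[(2.11.c)]{takahashi:hasm} and \cite[(8.2)]{christensen:scatac} in~\eqref{para0201}, so the proof is sound as written.
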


\begin{cor} \label{cor0202}
Assume that $R$ is local, and 
let $C$ be a semidualizing $R$-module. The following conditions are equivalent:
\begin{enumerate}[\rm(i)]
\item \label{cor0202a}
$C$ is a dualizing $R$-module;
\item \label{cor0202b}
there exists a finitely generated $R$-module $M\neq 0$ such that
$\icid_R(M)<\infty$ and $\pd_R(M)<\infty$;
\item \label{cor0202c}
there exists an $R$-module $M\neq 0$ of finite depth
such that
$\icid_R(M)<\infty$ and $\gpd_R(M)<\infty$. \qed
\end{enumerate}
\end{cor}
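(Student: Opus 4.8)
The plan is to run the cycle of implications (ii)$\Rightarrow$(iii)$\Rightarrow$(i)$\Rightarrow$(ii), exactly parallel to the proof of Corollary~\ref{cor0201}, with Theorem~\ref{thm0201} playing the role that Theorem~\ref{thm0001} played there. Almost all of the substance is delegated to Theorem~\ref{thm0201}, and the remaining implications are formal.

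I would first dispatch (ii)$\Rightarrow$(iii), which is immediate. If $M$ is finitely generated, nonzero, with $\pd_R(M)<\infty$ and $\icid_R(M)<\infty$, then over the local ring $R$ the module $M$ has finite depth (one has $\depth_R(M)\leq\dim R<\infty$), and $\gpd_R(M)\leq\pd_R(M)<\infty$ because projective modules are Gorenstein projective. Hence the same $M$ witnesses~(iii).

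The heart of the argument is (iii)$\Rightarrow$(i), and here I would simply quote Theorem~\ref{thm0201}. Given $M\neq 0$ of finite depth with $\icid_R(M)<\infty$ and $\gpd_R(M)<\infty$, the theorem applies. Since $R$ is local with maximal ideal $\m$ we have $R_\m=R$ and $M_\m=M$, so the standing hypothesis $\depth_R(M)<\infty$ is exactly the condition $\depth_{R_\m}(M_\m)<\infty$ needed to invoke the localization conclusion of the theorem at $\p=\m$. That conclusion gives that $C_\m=C$ is dualizing for $R_\m=R$, which is~(i).

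Finally, for (i)$\Rightarrow$(ii) I would take $M=R$, which is finitely generated, nonzero, and has $\pd_R(R)=0$; it remains only to see that $\icid_R(R)<\infty$. I would verify this by hand: choose a finite injective resolution $0\to C\to I^0\to\cdots\to I^n\to 0$ of $C$, which exists because $\id_R(C)=n<\infty$, and apply $\Hom_R(C,-)$ to the truncation $I^0\to\cdots\to I^n$. The terms $\Hom_R(C,I^j)$ lie in $\catic$, and the cohomology of the resulting bounded complex is $\ext^j_R(C,C)$, which equals $R$ in degree $0$ and vanishes in higher degrees since $C$ is semidualizing. Thus this complex is a $\catic$-coresolution of $R$, giving $\icid_R(R)\leq\id_R(C)<\infty$. (Equivalently, condition~(ii) here is verbatim condition~(ii) of Corollary~\ref{cor0203}, so this implication is already in hand.) There is no genuine obstacle in this corollary; the only point deserving attention is the endpoint bookkeeping in this last coresolution---confirming that the left-hand kernel reproduces $\Hom_R(C,C)=R$ and that the top cokernel vanishes because $\ext^n_R(C,C)=0$.
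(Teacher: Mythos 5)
Your proposal is correct and takes essentially the approach the paper intends: the paper gives no separate proof of this corollary, stating only that it is ``proved similarly'' to Corollary~\ref{cor0201}, and your cycle (ii)$\Rightarrow$(iii)$\Rightarrow$(i)$\Rightarrow$(ii) with Theorem~\ref{thm0201} supplying (iii)$\Rightarrow$(i) is exactly that template. Your witness $M=R$ for (i)$\Rightarrow$(ii), with $\icid_R(R)\leq\id_R(C)<\infty$ obtained by applying $\Hom_R(C,-)$ to a bounded injective resolution of $C$ and using $\ext^{\geq 1}_R(C,C)=0$, is the natural analogue of the paper's choice of $M=C$ in Corollary~\ref{cor0201}.
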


\begin{disc} \label{disc0202}
As is noted in~\cite{holm:rfgid}, when $R$ has finite Krull dimension, 
we can change 
$\gpd_R(M)$ and $\pd_R(M)$  to $\gfd_R(M)$ and $\fd_R(M)$, respectively,
in the previous two results.
Similarly, in the next two results, if $\dim(R)<\infty$, then
$\gpcpd_R(M)$ and $\pcpd_R(M)$ can be changed to
$\gfcpd_R(M)$ and $\catf_C\text{-}\pd_R(M)$.
\end{disc}

\begin{thm} \label{thm0203}
Let 
$C$ be a semidualizing $R$-module, and 
let $M$ be an $R$-module
with $\id_R(M)<\infty$ and $\gpcpd_R(M)<\infty$. Then
$\pcpd_R(M)=\gpcpd_R(M)<\infty$. Furthermore, for each $\p\in\spec(R)$
such that $\depth_{R_{\p}}(M_{\p})$ is finite, the localization
$C_{\p}$ is a dualizing 
$R_{\p}$-module. \qed
\end{thm}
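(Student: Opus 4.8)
The plan is to follow the template established by the proof of Theorem~\ref{thm0001} in~\eqref{para0201}, but with the roles of injectives and projectives reversed, so that Lemma~\ref{lem0101} replaces Lemma~\ref{lem0102}. First I would invoke Lemma~\ref{lem0101}: since $\gpcpd_R(M)<\infty$, there is an exact sequence
\begin{equation*}
0\to M\to P\to M'\to 0
\end{equation*}
with $M'\in\catgpc(R)$ and $\pcpd_R(P)=\gpcpd_R(M)<\infty$. Next I would locate the relevant classes. The finiteness of $\id_R(M)$ puts $M\in\catbc(R)$ by Fact~\ref{projac}, and the finiteness of $\pcpd_R(P)$ likewise puts $P\in\catbc(R)$; since $\catbc(R)$ is closed under cokernels of monomorphisms (Fact~\ref{projac}), it follows that $M'\in\catbc(R)$ as well.

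The key step is to show the sequence splits, which I expect to be the main obstacle and the place where the argument must be dualized most carefully. The goal is to prove $\ext^{\geq 1}_R(M',M)=0$, so that the displayed sequence splits. The strategy mirrors the $\ext^{\geq 1}_R(M,M')=0$ computation in~\eqref{para0201}: let $X$ be a complete $\catpp\catpcc$-resolution of $M'$ and set $M^{(i)}=\im(\partial^X_i)$ for each $i\in\bbz$. Because $M'$ and each $X_i$ lie in $\catbc(R)$ (the components $X_i$ are either projective, hence of finite projective dimension, or $C$-projective, hence of finite $\catpc$-projective dimension, so Fact~\ref{projac} applies), each syzygy $M^{(i)}$ lies in $\catbc(R)$ too, whence $\tor^R_{\geq 1}(C,\hom_R(C,M^{(i)}))=0$ and the evaluation map is an isomorphism. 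One then uses that $M$ has finite injective dimension, say $\id_R(M)=e$, to get $\ext^{\geq e+1}_R(-,M)=0$ on modules computed from the acyclic complex, and a dimension-shifting argument along $X$ transfers vanishing back to $\ext^{\geq 1}_R(M',M)$. Concretely, I expect a chain of isomorphisms of the shape
\begin{equation*}
\ext^{\geq 1}_R(M',M)\cong\ext^{\geq e+1}_R(M^{(-e)},M)=0,
\end{equation*}
where the final vanishing comes from $\id_R(M)=e<\infty$; the interaction between the $\catpc$-projective part of $X$ and the $C$-duality isomorphisms on $\hom_R(C,M)$ is the delicate point that must be checked rather than asserted.

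Once the sequence splits, $M$ is a direct summand of $P$, so $\pcpd_R(M)\leq\pcpd_R(P)=\gpcpd_R(M)$; combined with the trivial inequality $\gpcpd_R(M)\leq\pcpd_R(M)$ this gives the equality $\pcpd_R(M)=\gpcpd_R(M)<\infty$, the dual of~\cite[(2.27)]{holm:ghd} supplying the final identification if needed. For the localization statement I would argue exactly as at the end of~\eqref{para0201}: fix $\p\in\spec(R)$ with $\depth_{R_{\p}}(M_{\p})$ finite. By Fact~\ref{fact0101} we have $\pcppd_{R_{\p}}(M_{\p})<\infty$, and localization of injective resolutions gives $\id_{R_{\p}}(M_{\p})<\infty$. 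Then $M_{\p}\in\catb_{C_{\p}}(R_{\p})$, so $\ext^{\geq 1}_{R_{\p}}(C_{\p},M_{\p})=0$, and~\cite[(2.11.c)]{takahashi:hasm} bounds $\fd_{R_{\p}}(\hom_{R_{\p}}(C_{\p},M_{\p}))$ by $\pcppd_{R_{\p}}(M_{\p})<\infty$. Since $M_{\p}$ has finite injective dimension and finite depth, \cite[(8.2)]{christensen:scatac} forces $C_{\p}$ to be dualizing for $R_{\p}$, as desired.
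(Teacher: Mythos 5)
Your skeleton is the paper's own: Theorem~\ref{thm0203} is indeed proved ``similarly'' to Theorem~\ref{thm0001}, with Lemma~\ref{lem0101} replacing Lemma~\ref{lem0102}, and your splitting strategy, the summand inequality at the end, and the localization paragraph all match the paper. However, there is a genuine flaw in the middle of your argument. You place the projective terms $X_i$ ($i\geq 0$) of the complete $\catpp\catpcc$-resolution in $\catbc(R)$ on the grounds that they have finite projective dimension ``so Fact~\ref{projac} applies.'' That is not what Fact~\ref{projac} says: finite flat (hence finite projective) dimension gives membership in the \emph{Auslander} class $\catacr$, whereas membership in the \emph{Bass} class $\catbcr$ is guaranteed only for modules of finite injective dimension or finite $\catpc$-projective dimension. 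Projective modules are in general not in $\catbcr$; for instance, when $R$ is local one can check that $R\in\catbcr$ forces $C\cong R$ (surjectivity of the evaluation map $C\otimes_R\hom_R(C,R)\to R$ makes $R$ a summand of some $C^n$, whence $C$ is free and $\hom_R(C,C)\cong R$ gives $C\cong R$). Consequently your claim that \emph{every} syzygy $M^{(i)}$ lies in $\catbcr$ is unjustified, and it can genuinely fail for $i\geq 1$. Moreover, the conclusions you draw from it --- the Tor-vanishing and the evaluation isomorphism for the $M^{(i)}$ --- are never used later, while the step you explicitly defer as ``the delicate point that must be checked rather than asserted'' is exactly the content of the claim, so as written the proof has a hole precisely at its key step.

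The repair is short, because the correct dualization runs through $M$, not through the syzygies. The chain $\ext^{\geq 1}_R(M',M)\cong\ext^{\geq e+1}_R(M^{(-e)},M)=0$ only uses the short exact sequences $0\to M^{(i+1)}\to X_i\to M^{(i)}\to 0$ for $i=-1,\dots,-e$, and in this range $X_i\in\catpcr$. The input needed for each shift is $\ext^{\geq 1}_R(X_i,M)=0$, and this follows from the one Bass-class membership you did establish correctly, namely $M\in\catbcr$: writing $X_i\cong C\otimes_RQ$ with $Q$ projective, adjointness gives
$$\ext^{n}_R(C\otimes_RQ,M)\cong\hom_R(Q,\ext^{n}_R(C,M))=0 \qquad (n\geq 1),$$
since $\ext^{\geq 1}_R(C,M)=0$ is part of the definition of $M\in\catbcr$. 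This is the mirror image of the adjointness display in~\eqref{para0201}: there the $C$-projectives resolve $M$, so the Bass condition is needed on the targets $M^{(i)}$; here they coresolve $M'$, so the Bass condition is needed on $M$ alone, and the projective terms $X_i$ ($i\ge 0$) never enter. Deleting your Bass-class assertions about the $X_i$ and $M^{(i)}$ and inserting this one computation makes your proof complete and essentially identical to the paper's intended argument.
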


\begin{cor} \label{cor0204}
Assume that $R$ is local, and 
let $C$ be a semidualizing $R$-module. The following conditions are equivalent:
\begin{enumerate}[\rm(i)]
\item \label{cor0204a}
$C$ is a dualizing $R$-module;
\item \label{cor0204b}
there exists a finitely generated $R$-module $M\neq 0$ such that
$\id_R(M)<\infty$ and $\pcpd_R(M)<\infty$;
\item \label{cor0204c}
there exists an $R$-module $M\neq 0$ of finite depth
such that
$\id_R(M)<\infty$ and $\gpcpd_R(M)<\infty$. \qed
\end{enumerate}
\end{cor}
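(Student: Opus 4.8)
The plan is to prove the cycle of implications \eqref{cor0204b}$\implies$\eqref{cor0204c}$\implies$\eqref{cor0204a}$\implies$\eqref{cor0204b}, reducing all of the genuine content to Theorem~\ref{thm0203}, precisely in parallel with the way Corollary~\ref{cor0201} is deduced from Theorem~\ref{thm0001}. Two of the three implications are formal, and the third is a direct application of the localized conclusion of Theorem~\ref{thm0203} at the maximal ideal.

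For \eqref{cor0204b}$\implies$\eqref{cor0204c} I would note that a nonzero finitely generated module over the local ring $R$ automatically has finite depth, since $\depth_R(M)\leq\dim R<\infty$, and that finiteness of $\pcpd_R(M)$ forces finiteness of $\gpcpd_R(M)$ because every $C$-projective module is $\text{G}_C$-projective, so $\gpcpd_R(M)\leq\pcpd_R(M)$; the hypothesis $\id_R(M)<\infty$ is simply retained. For \eqref{cor0204a}$\implies$\eqref{cor0204b} I would take $M=C$: it is nonzero and finitely generated, it satisfies $\pcpd_R(C)=0$ because $C\cong R\otimes_R C\in\catpc(R)$, and $\id_R(C)<\infty$ holds by the very definition of a dualizing module. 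These two steps require nothing beyond standard permanence of homological dimensions and the definitions.

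The substantive implication is \eqref{cor0204c}$\implies$\eqref{cor0204a}. Here I would feed the given module $M\neq 0$ of finite depth, with $\id_R(M)<\infty$ and $\gpcpd_R(M)<\infty$, directly into Theorem~\ref{thm0203}, whose conclusion is that $C_{\p}$ is dualizing for $R_{\p}$ for every $\p\in\spec(R)$ with $\depth_{R_{\p}}(M_{\p})$ finite. Applying this at $\p=\m$, where $\depth_{R_{\m}}(M_{\m})=\depth_R(M)$ is finite by hypothesis, yields that $C=C_{\m}$ is dualizing for $R=R_{\m}$, which is exactly \eqref{cor0204a}.

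Since the difficult analytic work has already been carried out in Theorem~\ref{thm0203}, the only point demanding care is verifying that the maximal ideal truly satisfies the depth hypothesis of that theorem: the finite-depth assumption on $M$ built into \eqref{cor0204c} is exactly what licenses specializing the theorem's conclusion to $\p=\m$. I would expect this bookkeeping to be the sole place a reader might hesitate; the remaining steps are purely formal.
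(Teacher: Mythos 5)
Your proposal is correct and follows essentially the same route as the paper: the paper proves this corollary "similarly" to Corollary~\ref{cor0201}, i.e., via the same cycle \eqref{cor0204b}$\implies$\eqref{cor0204c}$\implies$\eqref{cor0204a}$\implies$\eqref{cor0204b}, with the substantive step being an application of Theorem~\ref{thm0203} at the maximal ideal and the step \eqref{cor0204a}$\implies$\eqref{cor0204b} given by taking $M=C$. Your bookkeeping (finite depth of nonzero finitely generated modules, $\gpcpd_R(M)\leq\pcpd_R(M)$, and $\depth_{R_{\m}}(M_{\m})=\depth_R(M)$) is exactly what the paper leaves implicit as "straightforward."
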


\begin{disc} \label{disc0201}
Holm proves his results in a more general setting than ours, namely,
over associative rings.
While the Gorenstein projective dimension and Gorenstein
injective dimension have been well-studied
in this setting, the same cannot be said for $\gc$-projective dimension and 
$\gc$-injective dimension.
Some of the foundation has been laid
by Holm and White~\cite{holm:fear}. To prove our results in this setting, though,
would require
a development of these ideas that is outside the scope of this paper.
\end{disc}

The next lemma is useful for the two subsequent proofs.

\begin{lem} \label{lem99}
Let $C$ be a semidualizing $R$-module.
If $\icid_R(C)<\infty$, then $C\cong R$ and $R$ is Gorenstein.
\end{lem}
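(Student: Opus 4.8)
The plan is to reduce to the local case and to realize a dualizing module as a tensor power of $C$. First, since $\icid_R(C)<\infty$, Fact~\ref{projac} places $C$ in the Auslander class $\catacr$; in particular $\tor^R_{\geq 1}(C,C)=0$. Moreover, I can push the finite $C$-injective dimension through Foxby equivalence. Given a bounded coresolution $0\to C\to \Hom_R(C,I^0)\to\cdots\to\Hom_R(C,I^{-n})\to 0$ of $C$ by modules in $\caticr$, each cosyzygy lies in $\catacr$ (which contains $\caticr$ and is closed under cokernels of monomorphisms by Fact~\ref{projac}), so $\tor^R_{\geq 1}(C,-)$ vanishes on every cosyzygy and $C\otimes_R-$ preserves exactness of the coresolution. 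Since injectives lie in $\catbcr$, the evaluation map gives $C\otimes_R\Hom_R(C,I)\cong I$ for each injective $I$, so applying $C\otimes_R-$ produces a bounded injective coresolution of $C\otimes_R C$; hence $\id_R(C\otimes_R C)\leq n<\infty$.

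Next I claim $C\otimes_R C$ is semidualizing. As $\tor^R_{\geq 1}(C,C)=0$, there is an isomorphism $C\otimes_R C\simeq C\lotimes_R C$ in the derived category, and membership $C\in\catacr$ gives $\rhom_R(C,C\otimes_R C)\simeq C$. Therefore
\[
\rhom_R(C\otimes_R C,\,C\otimes_R C)\simeq\rhom_R\bigl(C,\rhom_R(C,C\otimes_R C)\bigr)\simeq\rhom_R(C,C)\simeq R,
\]
by Hom-tensor adjunction and the semidualizing property $\rhom_R(C,C)\simeq R$. Thus $C\otimes_R C$ is semidualizing, and combined with the previous paragraph it has finite injective dimension, so $C\otimes_R C$ is \emph{dualizing}. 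In particular $R$ is Cohen--Macaulay; writing $\da{C}=\Hom_R(C,C\otimes_R C)$ for the dual with respect to this dualizing module, the defining isomorphism of $\catacr$ shows $\da{C}\cong C$, so $C$ is self-dual.

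The remaining step---which I expect to be the crux---is to pass from ``$C\otimes_R C$ is dualizing'' to $C\cong R$. The underlying fact is that a dualizing module admits no nontrivial factorization as a tensor product of semidualizing modules, equivalently that a semidualizing module $C$ with $\tor^R_{\geq 1}(C,C)=0$ (self-orthogonality) must be free; I would invoke the rigidity theory of semidualizing modules for this (see, e.g., \cite{christensen:scatac}), reducing to the local case through Fact~\ref{fact0101}, where $C_\p$ is semidualizing over $R_\p$ with $\icpid_{R_\p}(C_\p)\leq\icid_R(C)<\infty$. Granting $C\cong R$, Gorensteinness is then immediate, since the hypothesis reads $\id_R(R)=\icid_R(R)=\icid_R(C)<\infty$ (for $C=R$ the $C$-injectives are exactly the injectives). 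Thus essentially all of the genuine difficulty sits in the self-orthogonality/non-factorization input, while the finiteness of $\id_R(C\otimes_R C)$ and the identification of $C\otimes_R C$ as dualizing are routine Foxby equivalence.
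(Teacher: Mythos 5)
Your argument is correct and is essentially the paper's own proof: the core chain is $C\in\catacr$ $\Rightarrow$ $\tor^R_{\geq 1}(C,C)=0$ $\Rightarrow$ $C\otimes_RC$ semidualizing $\Rightarrow$ $C\cong R$ $\Rightarrow$ $\id_R(R)<\infty$, where the paper cites \cite[(3.8)]{frankild:rbsc} for the step you re-derive by hand via the Auslander class, and cites \cite[(3.2)]{frankild:sdcms} (not \cite{christensen:scatac}) for precisely the rigidity fact you defer to, so the "crux" you flag is indeed an external input in the paper as well. Your extra work showing $\id_R(C\otimes_RC)<\infty$, so that $C\otimes_RC$ is dualizing, is correct but unnecessary: once $C\otimes_RC$ is known to be semidualizing, the rigidity result applies directly, with no need for finite injective dimension or the reduction to the Cohen--Macaulay case.
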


\begin{proof}
Assume that $\icid_R(C)<\infty$.
Fact~\ref{projac} implies that $C\in\catacr$. By definition, this includes the condition
$\tor_{\geq 1}^R(C,C)=0$, and so~\cite[(3.8)]{frankild:rbsc} implies that
$C\otimes_RC$ is a semidualizing $R$-module. From~\cite[(3.2)]{frankild:sdcms}
we conclude that $C\cong R$. 
It follows that
$\id_R(R)=\icid_R(C)< \infty $
and so $R$ is Gorenstein as desired.
\end{proof}

\begin{disc} \label{disc:RTDW}
In unpublished work, Takahashi and White have proved the following result that
is weaker than Theorem~\ref{thm0002}:
If $R$ is Cohen-Macaulay with a dualizing module $D$ and 
$C$ is a semidualizing module with $\icid_R(C)<\infty$, then $C\cong D$.
\end{disc}

\begin{para} \label{para0202}
\emph{Proof of Theorem~\ref{thm0002}.}
Let $\p\in\supp_R(M)$, and replace $R$ with $R_{\p}$ to assume that $R$ is local.
In particular, every projective $R$-module is free, and so $M\cong C\oplus M'$
for some $M'\in\catpcr$. In the next sequence, the final equality
is from~\cite[(2.11.b)]{takahashi:hasm}
\begin{align*}
\sup\{\id_R(C\otimes_RC),\id_R(C\otimes_RM')\}
&=\id_R((C\otimes_RC)\oplus(C\otimes_RM'))\\
&=\id_R(C\otimes_R(C\oplus M'))\\
&=\id_R(C\otimes_RM)\\
&=\icid_R(M)
\end{align*}
and so
$\icid_R(C)\leq\icid_R(M)<\infty$. 
Lemma~\ref{lem99} implies that $R$ is Gorenstein, as desired.
\qed
\end{para}

The next result 
contains another partial answer to the question of Takahashi and White.
We include the proof because it is different from the proof of
Theorem~\ref{thm0002}.

\begin{thm} \label{thm0204}
If 
$C$ is a semidualizing $R$-module and $M$ is an $R$-module
such that $\pcpd_R(M)<\infty$ and $\icid_R(M)=0$, then
$R_{\p}$ is  Gorenstein for all $\p\in\spec(R)$ such that
$\depth_{R_{\p}}(M_{\p})$ is finite.
\end{thm}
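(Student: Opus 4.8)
The plan is to reduce, after localizing, to the hypothesis of Lemma~\ref{lem99}: I will show that $\icid_R(C)<\infty$ over the localized ring and then invoke Lemma~\ref{lem99} to conclude that the ring is Gorenstein. Fix $\p\in\spec(R)$ with $\depth_{R_\p}(M_\p)<\infty$. Using Fact~\ref{fact0101} I would replace $R$ by $R_\p$ and $M$ by $M_\p$, so that $R$ is local with maximal ideal $\m$ and residue field $k$, the module $M$ has finite depth, $\pcpd_R(M)<\infty$, and $\icid_R(M)=0$. In particular $M\neq 0$, so $M\cong\Hom_R(C,I)$ for some nonzero injective $R$-module $I$. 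Writing $E=E_R(R/\m)$ and $(-)^\vee=\Hom_R(-,E)$ for the Matlis functor, the whole argument is organized around the finitely generated module $C\otimes_R C$ and its dual.

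First I would convert the projective-dimension hypothesis into information about $C\otimes_R C$. Since $\pcpd_R(M)<\infty$ we have $M\in\catbc(R)$ by Fact~\ref{projac}, so the Takahashi--White equality~\cite[(2.11.c)]{takahashi:hasm} gives $\pd_R(\Hom_R(C,M))=\pcpd_R(M)<\infty$. Hom-tensor adjunction identifies $\Hom_R(C,M)\cong\Hom_R(C,\Hom_R(C,I))\cong\Hom_R(C\otimes_R C,I)$, whence $\pd_R(\Hom_R(C\otimes_R C,I))<\infty$.

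Next comes the step that uses finite depth, which I expect to be the crux. Because $\depth_R(M)<\infty$ there is an integer $n$ with $\ext^n_R(k,M)\neq 0$; using the injectivity of $I$ and the isomorphism $\ext^n_R(k,\Hom_R(C,I))\cong\Hom_R(\tor^R_n(k,C),I)$, together with the fact that $\tor^R_n(k,C)$ is a finite-dimensional $k$-space, this forces $\Hom_R(k,I)\neq 0$; hence $\m\in\ass_R(I)$ and $E$ is a direct summand of $I$, say $I\cong E\oplus I'$. Splitting $\Hom_R(C\otimes_R C,I)$ accordingly shows that $(C\otimes_R C)^\vee=\Hom_R(C\otimes_R C,E)$ is a direct summand of a module of finite projective dimension, so $s:=\pd_R((C\otimes_R C)^\vee)<\infty$. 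To convert this back into an injective-dimension statement I set $Y=C\otimes_R C$ and use the standard isomorphism $\ext^i_R(k,Y)^\vee\cong\tor^R_i(k,Y^\vee)$, which vanishes for $i>s$; since $\ext^i_R(k,Y)$ is a finite-dimensional $k$-space, the Matlis functor is faithful on it, so $\ext^i_R(k,Y)=0$ for $i>s$. By Bass's characterization of injective dimension for finitely generated modules, $\id_R(C\otimes_R C)=\sup\{i:\ext^i_R(k,Y)\neq 0\}\le s<\infty$. The inequality $\icid_R(C)\le\id_R(C\otimes_R C)$ from~\cite[(2.11.b)]{takahashi:hasm} (the same inequality used in~\eqref{para0202}) then yields $\icid_R(C)<\infty$, and Lemma~\ref{lem99} shows that $R$ is Gorenstein. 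As $\p$ was an arbitrary prime with $\depth_{R_\p}(M_\p)<\infty$, this proves the theorem.

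The main obstacle is the passage in the third paragraph: arranging, via the finite-depth hypothesis, that the injective module $I$ genuinely contains $E_R(R/\m)$ as a summand (so that the Matlis functor ``sees'' the residue field), and then correctly turning ``$\pd$ of the Matlis dual is finite'' into ``$\id_R(C\otimes_R C)$ is finite.'' An alternative to the Bass/Tor computation is to pass to the completion $\comp{R}$, where Matlis duality is a perfect duality: dualizing a finite free resolution of $(C\otimes_R C)^\vee$ over $\comp{R}$ gives $\id_{\comp{R}}\!\big(\comp{R}\otimes_R(C\otimes_R C)\big)<\infty$, and injective dimension of a finitely generated module is preserved by completion, so again $\id_R(C\otimes_R C)<\infty$. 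I would prefer the Bass-characterization argument since it stays over $R$ and applies Lemma~\ref{lem99} directly, avoiding any descent of the Gorenstein property from $\comp{R}$.
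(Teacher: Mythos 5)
Your proposal tracks the paper's argument through its first half: the localization, the use of \cite[(2.11.c)]{takahashi:hasm} together with Hom-tensor adjunction, and the Ext--Tor computation showing that finite depth forces $E_R(k)$ to split off of $I$ is precisely the paper's associated-prime argument. The Matlis-duality computation giving $\pd_R((C\otimes_RC)^{\vee})<\infty$ and then $\id_R(C\otimes_RC)\leq s<\infty$ (via Bass's characterization) is also correct. The gap is the final step: the inequality $\icid_R(C)\leq\id_R(C\otimes_RC)$ is not what \cite[(2.11.b)]{takahashi:hasm} says. That result is a Foxby-equivalence statement: it yields $\icid_R(N)=\id_R(C\otimes_RN)$ only when $N$ lies in the Auslander class $\catacr$ (equivalently, as part of its biconditional, only when $\icid_R(N)<\infty$). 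In \eqref{para0202} it is applied to a module whose $\catic$-injective dimension is finite \emph{by hypothesis}, so membership in $\catacr$ is automatic by Fact~\ref{projac}; you would apply it to $N=C$, and $C\in\catacr$ is exactly what you may not assume. Indeed, the proof of Lemma~\ref{lem99} shows that $C\in\catacr$ already forces $C\cong R$ (via \cite[(3.8)]{frankild:rbsc} and \cite[(3.2)]{frankild:sdcms}), so the missing hypothesis is essentially the conclusion of the theorem. Concretely: from a finite injective coresolution of $C\otimes_RC$ you can apply $\Hom_R(C,-)$ and obtain a bounded complex of modules in $\caticr$, and Bass-class membership of $C\otimes_RC$ (which follows from finite injective dimension) does give exactness in positive degrees; but the degree-zero cohomology is $\Hom_R(C,C\otimes_RC)$, and nothing you have established identifies this module with $C$. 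So what you actually get is $\icid_R(\Hom_R(C,C\otimes_RC))<\infty$, which does not feed into Lemma~\ref{lem99}. Your alternative route through the completion hits the same wall, since it too ends at $\id_R(C\otimes_RC)<\infty$.

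The repair is to dualize one step earlier, which is what the paper does. Do not take a projective resolution of $\Hom_R(C,M')\cong(C\otimes_RC)^{\vee}$ and dualize it; instead, after reducing (as you did) to a local ring with $M\cong\Hom_R(C,E)$, $E=E_R(k)$, and then passing to the completion, take a bounded augmented $\catpc$-resolution $0\to C\otimes_RP_n\to\cdots\to C\otimes_RP_0\to M\to 0$ and apply $\Hom_R(-,E)$ to it. Each term $\Hom_R(C\otimes_RP_i,E)\cong\Hom_R(C,\Hom_R(P_i,E))$ lies in $\caticr$, and Matlis duality over the complete ring gives $\Hom_R(M,E)\cong C$, so the dualized sequence is a bounded augmented $\catic$-coresolution of $C$ itself; then $\icid_R(C)<\infty$ and Lemma~\ref{lem99} finishes the proof. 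This constructs the needed coresolution explicitly and never requires $C$ to belong to the Auslander class.
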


\begin{proof}
As $M\in\caticr$, we have $M\cong\hom_R(C,E)$ for some injective $R$-module $E$.

We first show that the assumption that $\depth_{R_{\p}}(M_{\p})$ is finite implies that
$\p\in\ass_R(M)$. The fact that $C$ is finitely generated and $E$ is injective
yields the next isomorphisms
\begin{align*}
\ext^i_{R_{\p}}(R_{\p}/\p R_{\p},M_{\p})
&\cong\ext^i_{R_{\p}}(R_{\p}/\p R_{\p},\hom_R(C,E)_{\p})\\
&\cong\ext^i_{R_{\p}}(R_{\p}/\p R_{\p},\hom_{R_{\p}}(C_{\p},E_{\p}))\\
&\cong\hom_{R_{\p}}(\tor_i^{R_{\p}}(R_{\p}/\p R_{\p},C_{\p}),E_{\p}).
\end{align*}
Each module $\tor_i^{R_{\p}}(R_{\p}/\p R_{\p},C_{\p})$
is a finite-dimensional vector space over $R_{\p}/\p R_{\p}$. Furthermore,
we have $\tor_0^{R_{\p}}(R_{\p}/\p R_{\p},C_{\p})\cong
R_{\p}/\p R_{\p}\otimes_{R_{\p}}C_{\p}\neq 0$ since $C_{\p}$ is  nonzero
and finitely generated over $R_{\p}$.
Since $\ext^i_{R_{\p}}(R_{\p}/\p R_{\p},M_{\p})\neq 0$ for some $i$,
by hypothesis, it therefore follows that 
$\ext^0_{R_{\p}}(R_{\p}/\p R_{\p},M_{\p})\neq 0$,
and so $\p R_{\p}\in\ass_{R_{\p}}(M_{\p})$, that is,
$\p\in\ass_R(M)$ as claimed.

Write
$E\cong\oplus_{\q} E_R(R/\q)^{(\mu_{\q})}$, where the direct sum is taken
over all $\q\in\spec(R)$. It follows that there are equalities
$$\ass_R(M)=\supp_R(C)\cap\ass_R(E)=\spec(R)\cap\ass_R(E)
=\{\q\in\spec(R)\mid\mu_{\q}\neq 0\}.$$
Since $\p\in\ass_R(M)$, this implies $E\cong E_R(R/\p)\oplus E'$ for
some injective $R$-module $E'$. It follows that
$M\cong\hom_R(C,E_R(R/\p))\oplus\hom_R(C,E')$.
As in the proof of Theorem~\ref{thm0002}, using~\cite[(2.11.c)]{takahashi:hasm} we see that
$\pcpd_R(\hom_R(C,E_R(R/\p)))<\infty$. 
If follows that we may replace $R$ with $R_{\p}$
and $M$ with $\hom_R(C,E_R(R/\p))_{\p}$ to assume that
$R$ is local with maximal ideal $\m$ and 
$M\cong\hom_R(C,E)$ where $E=E_R(R/\m)$.

Let $\comp R$ denote the completion of $R$. 
It is straightforward to show that the condition 
$\pcpd_R(M)<\infty$ implies
$\catp_{\comp C}\text{-}\pd_{\comp R}(M\otimes_R\comp R)<\infty$.
Also, we have isomorphisms
\begin{align*}
M\otimes_R\comp R
\cong \hom_R(C,E_R(R/\m))\otimes_R\comp R 
\cong \hom_{\comp R}(\comp C,E_{\comp R}(\comp R/\m\comp R))
\end{align*}
and so $M\otimes_R\comp R\in\cati_{\comp C}(\comp R)$.
It follows that we may replace $R$ with $\comp R$ and
$M$ with $M\otimes_R\comp R$ to assume that $R$ is complete.

To complete the proof, we show that $\icid_R(C)<\infty$;
the desired conclusion then follows from Lemma~\ref{lem99}.
The module $M$ admits a bounded augmented $\catpc$-resolution
$$0\to C\otimes_R P_n\to\cdots\to C\otimes_R P_0\to M\to 0.$$
Applying the functor
$\hom_R(-,E)$
yields an exact sequence
$$0\to\underbrace{\hom_R(M,E)}_{\cong C}
\to \hom_R(C\otimes_R P_0,E)\to\cdots
\to \hom_R(C\otimes_R P_n,E)\to 0.$$
The isomorphism $\hom_R(M,E)\cong C$ follows from Matlis duality
because of the assumption $M\cong\hom_R(C,E)$.
Since each $P_i$ is projective, each module $\hom_R(P_i,E)$ 
is injective,
and so
$$\hom_R(C\otimes_R P_i,E)
\cong\hom_R(C,\hom_R(P_i,E))\in\catic(R).$$
It follows that the displayed exact sequence is an augmented
$\catic$-coresolution of $C$, and so $\icid_R(C)<\infty$, as desired.
\end{proof}

For our final result, recall that, when $R$ is local with maximal ideal $\m$,
the \emph{width} of an $R$-module $M$ is
$\width_R(M)=\inf\{i\geq 0\mid \tor_i^R(k,M)\neq 0\}$ where $k=R/\m$.

\begin{cor} \label{cor0205}
If 
$C$ is a semidualizing $R$-module and $M$ is an $R$-module
such that $\fcpd_R(M)=0$ and $\icid_R(M)<\infty$, then
$R_{\p}$ is  Gorenstein for all $\p\in\spec(R)$ such that
$\width_{R_{\p}}(M_{\p})$ is finite.
\end{cor}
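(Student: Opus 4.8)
The plan is to fix a prime $\p$ with $\width_{R_\p}(M_\p)<\infty$ and prove that $R_\p$ is Gorenstein; the width hypothesis will be used precisely to force a certain flat module to be \emph{faithfully} flat, after which the argument mirrors Lemma~\ref{lem99}. First I would localize at $\p$ and relabel, using Fact~\ref{fact0101} to keep $\icid_R(M)<\infty$: thus I may assume $R$ is local with maximal ideal $\m$ and residue field $k$, and that $M\cong F\otimes_R C$ for some flat $R$-module $F$ (this is what $\fcpd_R(M)=0$ provides, and it localizes), with $\icid_R(M)<\infty$ and $\width_R(M)<\infty$.

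The first key step is to deduce that $F$ is faithfully flat. Because $F$ is flat, for each $i$ there are isomorphisms
\[
\tor_i^R(k,M)\cong\tor_i^R(k,C)\otimes_R F\cong (F/\m F)^{\beta_i},
\]
where $\beta_i=\dim_k\tor_i^R(k,C)$ and $\beta_0=\dim_k(C/\m C)\geq 1$ since $C$ is finitely generated and nonzero. Consequently, if $F/\m F=0$ then every $\tor_i^R(k,M)$ vanishes and $\width_R(M)=\infty$, contrary to hypothesis. So $F/\m F\neq 0$, and a flat module over a local ring that is nonzero modulo the maximal ideal is faithfully flat.

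Next I would run the argument of Lemma~\ref{lem99}. Since $\icid_R(M)<\infty$, Fact~\ref{projac} gives $M\in\catacr$, hence $\tor^R_{\geq 1}(C,M)=0$; as $\tor_i^R(C,M)\cong\tor_i^R(C,C)\otimes_R F$ by flatness of $F$, faithful flatness forces $\tor^R_{\geq 1}(C,C)=0$. Exactly as in Lemma~\ref{lem99}, this makes $C\otimes_R C$ semidualizing and therefore $C\cong R$. With $C\cong R$ the class $\catic(R)$ is the class of injective modules and $M\cong F$, so $\id_R(F)=\icid_R(M)<\infty$. Finally, faithfully flat descent of injective dimension---via the flat base-change isomorphism $\ext_R^i(R/\q,F)\cong\ext_R^i(R/\q,R)\otimes_R F$ together with Bass's test for injective dimension on the modules $R/\q$---yields $\id_R(R)=\id_R(F)<\infty$, so $R$ is Gorenstein.

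The main obstacle is the passage from finite width to faithful flatness: this is the one place where the width hypothesis is genuinely used, and it is what allows both the Tor-vanishing and the finite injective dimension to descend from $F\otimes_R C$ to $C$ (and ultimately to $R$). Everything after that is a routine adaptation of Lemma~\ref{lem99}. An alternative ending, closer to the proof of Theorem~\ref{thm0204}, is to apply \cite[(2.11.b)]{takahashi:hasm} to $M\in\catacr$ to get $\id_R(C\otimes_R M)=\icid_R(M)<\infty$, rewrite $C\otimes_R M\cong F\otimes_R(C\otimes_R C)$, descend to $\id_R(C\otimes_R C)<\infty$, and conclude via the semidualizing property of $C\otimes_R C$. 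The only routine points to check are the base-change isomorphisms for $\tor$ and $\ext$, which hold because $C$ and each $R/\q$ are finitely generated over the noetherian ring $R$.
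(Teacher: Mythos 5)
Your proof is correct, but it follows a genuinely different route from the paper's. The paper argues by Matlis duality: setting $(-)^{\vee}=\Hom_R(-,E_R(k))$ after localizing, it shows that $M^{\vee}$ lies in $\caticr$ (so $\icid_R(M^{\vee})=0$), that $\pcpd_R(M^{\vee})<\infty$ (by dualizing the bounded $\catic$-coresolution of $M$ into a bounded $\catfc$-resolution via Ishikawa's theorems~\cite{ishikawa:oimafm}, then invoking Fact~\ref{fact0101'}, i.e.\ Gruson--Raynaud--Jensen), and that $\depth_R(M^{\vee})<\infty$ (finite width of $M$ dualizes to finite depth of $M^{\vee}$, exactly as in your last display); it then quotes Theorem~\ref{thm0204} applied to $M^{\vee}$, which in turn hides a completion argument and an associated-primes analysis before reaching Lemma~\ref{lem99}. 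You instead stay with $M\cong F\otimes_RC$ itself: the finite-width hypothesis forces $F/\m F\neq 0$, hence $F$ faithfully flat; membership $M\in\catacr$ (Fact~\ref{projac}, from $\icid_R(M)<\infty$) gives $\tor^R_{\geq 1}(C,C)\otimes_RF\cong\tor^R_{\geq 1}(C,M)=0$, which descends to $\tor^R_{\geq 1}(C,C)=0$; the same two citations that drive Lemma~\ref{lem99} (\cite[(3.8)]{frankild:rbsc} and \cite[(3.2)]{frankild:sdcms}) then give $C\cong R$; and faithfully flat descent of injective dimension (via $\ext^i_R(R/\q,F)\cong\ext^i_R(R/\q,R)\otimes_RF$ and the Baer-type criterion) gives $\id_R(R)\leq\id_R(F)=\icid_R(M)<\infty$. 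What each approach buys: the paper's proof is a clean reduction that reuses Theorem~\ref{thm0204} and highlights the duality between the $(\fcpd,\icid)$ and $(\pcpd,\icid)$ settings, at the cost of importing Matlis duality, Ishikawa's results, and Gruson--Raynaud--Jensen; your proof is more elementary and self-contained, bypassing all of that machinery (including the completion step inside Theorem~\ref{thm0204}) and isolating faithful flatness as the precise role of the width hypothesis. All the steps you flag as routine (the base-change isomorphisms for $\tor$ and $\ext$ over a noetherian ring with $C$ and $R/\q$ finitely generated, and the fact that a flat module nonzero modulo $\m$ is faithfully flat) are indeed standard, so the argument is complete.
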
 

\begin{proof}
As in the proof of Theorem~\ref{thm0002}, replace $R$ and $M$ with $R_{\p}$ and $M_{\p}$
to assume that $R$ is local and that $\width_R(M)$ is finite.
It remains to show that $R$ is Gorenstein.
Let $E$ denote the injective hull of the residue field of $R$, and
set $(-)^{\vee}=\hom_R(-,E)$.
By assumption, there is a flat $R$-module $F$ such that $M\cong F\otimes_R C$.
Hom-tensor adjointness can be used to show that the $R$-module $F^{\vee}$
is injective, and so the sequence of isomorphisms
$$
M^{\vee}
\cong\hom_R(F\otimes_R C,E)
\cong \hom_R(C,\hom_R(F,E))
=\hom_R(C,F^{\vee})
$$
shows that $M^{\vee}\in\caticr$. 

We claim that $\pcpd_R(M^{\vee})<\infty$. To see this, consider a bounded
augmented $\catic$-coresolution of $M$
$$0\to M\to \hom_R(C,I^0)\to \hom_R(C,I^1)\to\cdots\to \hom_R(C,I^n)\to 0.$$
The functor $(-)^{\vee}$ yields an exact sequence
\begin{equation}
\label{cor0205a}\tag{$\dagger$}
0 \to \hom_R(C,I^n)^{\vee}\to\cdots
\to \hom_R(C,I^1)^{\vee}
\to \hom_R(C,I^0)^{\vee}
\to M^{\vee}\to 0.
\end{equation}
For each $j$, 
the module $(I^j)^{\vee}$ is flat by~\cite[(1.5)]{ishikawa:oimafm},
and Hom-evaluation~\cite[(1.6)]{ishikawa:oimafm}
explains the isomorphism in the next display
$$
\hom_R(C,I^j)^{\vee}
=\hom_R(\hom_R(C,I^j),E)
\cong C\otimes_R\hom_R(I^j,E)
=C\otimes_R(I^j)^{\vee}.$$
It follows that $\hom_R(C,I^j)^{\vee}\in\catfcr$, and so the sequence
\eqref{cor0205a} implies that $\fcpd_R(M^{\vee})$ is finite. 
Since $R$ is local, it follows from Fact~\ref{fact0101'} that
$\pcpd_R(M^{\vee})$ is also finite, as claimed.

Next, we claim that $\depth_R(M^{\vee})<\infty$. (Once this is shown,
it follows that $R$ is Gorenstein by Theorem~\ref{thm0204},
using the module $M^{\vee}$.) To verify the claim, it suffices to show that
$\ext^i_R(k,M^{\vee})\neq 0$ for some $i$.
The assumption
$\width_R(M)<\infty$ implies $\tor^R_i(k,M)$ is a nonzero $k$-vector space for some $i$.
Write $\tor^R_i(k,M)\cong\oplus_{\lambda\in\Lambda}k$ for some index set $\Lambda\neq\emptyset$.
The first isomorphism in the next sequence is a version of Hom-tensor adjointness
$$
\ext^i_R(k,M^{\vee})
\cong\tor^R_i(k,M)^{\vee} 
\cong(\oplus_{\lambda\in\Lambda}k)^{\vee}
\textstyle\cong\prod_{\lambda\in\Lambda}k^{\vee}
\cong\prod_{\lambda\in\Lambda}k
\neq 0
$$
and the remaining steps are standard.
\end{proof}

\section*{Acknowledgments}

S.S.-W. would like to thank the IPM for its generosity and hospitality during his visit
when these results were proved. Both authors are grateful to Diana White for her 
comments on an earlier version of this manuscript,
and to the anonymous referee for useful comments.


\providecommand{\bysame}{\leavevmode\hbox to3em{\hrulefill}\thinspace}
\providecommand{\MR}{\relax\ifhmode\unskip\space\fi MR }
\providecommand{\MRhref}[2]{%
  \href{http://www.ams.org/mathscinet-getitem?mr=#1}{#2}
}
\providecommand{\href}[2]{#2}

\end{document}